\newcommand{\sO}{{\mathcal O}}
\newcommand{\sX}{{\mathcal X}}
\newcommand{\scrD}{{\mathscr D}}
\newcommand{\scrH}{{\mathscr H}}
\newcommand{\scrU}{{\mathscr U}}
\newcommand{\scrX}{{\mathscr X}}
\newcommand{\C}{{\mathbb C}}
\newcommand{\HH}{{\mathbb H}}
\newcommand{\N}{{\mathbb N}}
\newcommand{\Q}{{\mathbb Q}}
\newcommand{\R}{{\mathbb R}}
\newcommand{\codim}{\operatorname{codim}}
\newcommand{\Ext}{\operatorname{Ext}}
\newcommand{\isom}{{\ \cong\ }}
\newcommand{\lt}{{\rm{lt}}}
\newcommand{\ohne}{{\ \setminus \ }}
\newcommand{\ratl}{\dashrightarrow}
\newcommand{\reg}{{\operatorname{reg}}}
\newcommand{\sing}{{\operatorname{sing}}}
\renewcommand{\to}[1][]{\xrightarrow{\ #1\ }}
\newcommand{\vphi}{\varphi}
\newcommand{\ul}[1]{{\underline{#1}}}
\newcommand{\wt}[1]{{\widetilde{#1}}}
\newtheoremstyle{citing}% name
  {}%      Space above, empty = `usual value'
  {}%      Space below
  {\itshape}% Body font
  {}%         Indent amount (empty = no indent, \parindent = para indent)
  {\bfseries}% Thm head font
  {\textbf{.}}%        Punctuation after thm head
  {.5em}%     Space after thm head: " " = normal interword space;
\theoremstyle{plain}
\newtheorem{theorem}{Theorem}
\theoremstyle{definition}
\newtheorem{conjecture}[theorem]{Conjecture}
\newtheorem{corollary}[subsection]{Corollary}
\newtheorem{lemma}[theorem]{Lemma}
\newtheorem{proposition}[subsection]{Proposition}
\numberwithin{equation}{section}
\theoremstyle{remark}
\newtheorem{remark}[subsection]{Remark}
\theoremstyle{citing}
\newtheorem*{custom}{}}
\newcommand{\coeff}{{\operatorname{coeff}}}
\newcommand{\mld}{{\operatorname{mld}}}
\newcommand{\tX}{{\tilde X}}
\newcommand{\Def}{\operatorname{Def}}
\newcommand{\rat}[1]{{\stackrel{#1}{\ratl}}}
\title[LMMP for symplectic varieties]{On the log minimal model program for irreducible symplectic varieties}%, \\ version of \today}
\author{Christian Lehn}
\address{Christian Lehn\\Institut de Math\'ematiques de Jussieu\\ 
4 place Jussieu\\Bo\^ite 247, 75252 Paris Cedex 05\\France}
\email{christian.lehn@imj-prg.fr}
\author{Gianluca Pacienza}
\address{Gianluca Pacienza\\Institut de Recherche Math\'ematique
Avanc\'ee\\ Universit\'e de Strasbourg et CNRS\\
7 rue Ren\'e Descartes\\67084 Strasbourg Cedex\\France}
\email{pacienza@math.unistra.fr}
\let\origmaketitle\maketitle
\def\maketitle{
  \begingroup
  \let\MakeUppercase\relax % this disables uppercasing authors
  \origmaketitle
  \endgroup
}
\numberwithin{theorem}{section}
\begin{document}
\thispagestyle{empty}

\begin{abstract}
We show the termination of any log-minimal model program for a pair $(X,\Delta)$ of a symplectic manifold $X$ and an effective $\R$-divisor $\Delta$.
\end{abstract}

\maketitle

\setlength{\parindent}{0em}
\setcounter{tocdepth}{1}

%\tableofcontents 

%\begin{center}
%{\bf Preliminary version, please do not yet distribute!}
%\end{center}

%------------------------------------------------------------------------------------------
\section{Introduction}\label{sec intro}
%------------------------------------------------------------------------------------------
\thispagestyle{empty}

The minimal model program is by today well-established and an indispensable tool in the study of higher dimensional varieties. One of its most important goals is to find good representatives, so called \emph{minimal models}, in every birational equivalence class of algebraic varieties, but also to determine, given a variety $X$, how to connect it to one of its minimal models by elementary birational transformations. 

Though there has been a lot of progress, these goals have not yet been completely accomplished. The existence of minimal models as well as the termination of certain special MMPs have been established in many cases in the seminal paper \cite{BCHM}. What is missing in general is termination of flips.

%\enlargethispage*{2\baselineskip}

The goal of this paper is to show that symplectic manifolds behave as good as possible with respect to the MMP. For these manifolds, termination of log-flips has been shown by Matsushita \cite{Mat12} following a strategy due to Shokurov \cite{Sh03} (see below). However, the termination of log-flips does not imply that every MMP on a symplectic manifold terminates, for smoothness plays a crucial role in Matsushita's argument. For example, if the MMP produces not only flips but also divisorial contractions, the resulting variety will acquire singularities and then there could still be an infinite sequence of flips. Our main result is that this does not happen.

\begin{custom}[Theorem \ref{thm main}]
Let $X$ be a projective irreducible symplectic manifold and let $\Delta$ be an effective $\R$-Cartier divisor on $X$, such that the pair $(X,\Delta)$ is log-canonical. Then %log-MMPs for $(X,\Delta)$ exist and 
every log-MMP for $(X,\Delta)$ terminates in a minimal model $(X',\Delta')$ where $X'$ is a symplectic variety with canonical singularities  and $\Delta'$ is an effective, nef $\R$-Cartier divisor.
\end{custom}
It is well-known that from the previous result one derives the following (see \cite{Birkar} for the relevant definitions and further developments). 
\begin{corollary}
Let $X$ be a projective irreducible symplectic manifold and let $\Delta$ be an effective $\R$-Cartier divisor on $X$. Then birationally $\Delta$ has a Zariski decomposition in the sense of Fujita and in the sense of Cutkosky-Kawamata-Moriwaki.
\end{corollary}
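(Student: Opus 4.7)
The plan is to deduce the corollary from Theorem~\ref{thm main} together with the general result, developed in~\cite{Birkar}, that the termination of a log-MMP produces birational Zariski decompositions in the senses of Fujita and Cutkosky--Kawamata--Moriwaki.

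Since $X$ is irreducible symplectic, $K_X$ is trivial, so $K_X+\Delta\sim_\R \Delta$ and a birational Zariski decomposition of one is a birational Zariski decomposition of the other. To apply Theorem~\ref{thm main} one needs $(X,\Delta)$ to be log-canonical; if this is not the case, I would replace $\Delta$ by $\epsilon\Delta$ for a small rational $\epsilon>0$ such that $(X,\epsilon\Delta)$ is klt, which is possible because $X$ is smooth and hence the log-canonical threshold of $\Delta$ is positive. A birational Zariski decomposition of $\epsilon\Delta$ scales to one of $\Delta$, so this reduction is harmless.

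Theorem~\ref{thm main} then yields a log-minimal model $(X',\Delta')$ with $\Delta'$ nef. On a smooth common resolution $f\colon Y\to X$, $g\colon Y\to X'$, I would set $P:=g^{*}\Delta'$ and $N:=f^{*}\Delta-P$. Then $P$ is nef, and the negativity lemma applied to the birational contraction $Y\dashrightarrow X'$ implies that $N$ is effective and $g$-exceptional. The conditions characterising both the Fujita and the CKM decompositions---control over sections of multiples of $f^{*}\Delta$ in the first case, and negative-definiteness of the intersection form on the components of $N$ in the second---follow from these two properties via the arguments of~\cite{Birkar}.

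The main obstacle is to match the various technical conventions (klt vs.\ log-canonical hypotheses, $\Q$- vs.\ $\R$-coefficients, and the precise notion of minimal model used) between Theorem~\ref{thm main} and the statements in~\cite{Birkar}; once this bookkeeping is done, the terminating LMMP supplied by Theorem~\ref{thm main} feeds directly into Birkar's machinery and delivers both decompositions simultaneously.
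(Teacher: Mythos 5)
Your proposal is correct and follows exactly the route the paper intends: the paper offers no proof at all, merely asserting that the corollary is ``well-known'' to follow from Theorem~\ref{thm main} via \cite{Birkar}, and your reduction (rescaling $\Delta$ using $K_X\equiv 0$, pulling back the nef part of the minimal model to a common resolution, and invoking the negativity lemma) is precisely the standard argument being alluded to. The only blemish is that you have swapped the descriptions of the two decompositions --- the Fujita decomposition is characterised by maximality of the nef part $P$ among nef divisors below $f^*\Delta$ on all higher models, the CKM decomposition by $P$ capturing all sections of multiples of $f^*\Delta$, and negative-definiteness of the intersection form on $\operatorname{Supp}N$ is Zariski's original surface criterion rather than either of these --- but both properties do follow from your construction by the negativity lemma, so the argument stands.
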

The theorem above improves on \cite[Theorem 1.2]{Mat-Zh} where the authors prove the existence of log-minimal models for effective {\it movable} $\mathbb R$-divisors on an irreducible projective symplectic manifold.

The proof of Theorem \ref{thm main} follows Shokurov's strategy. Let us go a little more into details. To show the termination of flips, Shokurov introduced the so-called \emph{minimal log discrepancy} (mld for short), which is a local invariant associated with $(X,\Delta)$ and which increases under flips. It is nowadays interpreted as an invariant of the singularity of $(X,\Delta)$ at a given point. Shokurov has made two strong conjectures about the behaviour of mlds. These are the lower semi-continuity conjecture (LSC) and the ascending chain condition conjecture (ACC), see paragraph \ref{subsec mld conjectures}. Shokurov proved that these two conjectures imply termination of flips \cite{Sh03}. For smooth varieties, LSC holds by the fascinating paper \cite{EMY} and if all varieties in a sequence of flips are smooth, ACC holds for trivial reasons. However, even if we start with a smooth variety $X$, the MMP easily carries us out of the class of smooth varieties. Matsushita's keypoint is simply that a flip of a smooth symplectic variety remains smooth by deep results of Namikawa \cite{Na06}, see section \ref{sec term} for more details. 

Shokurov's conjectures ACC and LSC seem to be out of reach for arbitrary varieties. Starting with some variety $X$ and running a MMP might a priori produce a huge variety of different singularities. Nevertheless if we can bound the class of varieties that show up in intermediate steps of the MMP, then there is hope that Shokurov's strategy can be used.
In our case, this class of varieties will be the class of proper varieties with symplectic singularities in the sense of Beauville \cite{Be} which have a crepant resolution by a non-singular irreducible symplectic variety. We first prove the following result.

\begin{custom}[Theorem \ref{thm lsc}]
Let $Y$ be a normal projective $\Q$-Gorenstein variety and let $\Delta$ be an effective $\R$-Cartier divisor on $Y$ such that $(Y,\Delta)$ is log-canonical. If $\pi:X\to Y$ is a crepant morphism and LSC holds on $X$, then
LSC holds for $(Y,\Delta)$.
\end{custom}

In order to tackle ACC, we show that in a sequence of flips of singular symplectic varieties the singularities in fact \emph{do not change}. This allows us to use a result of Kawakita \cite{Kaw} to conclude. To the best of our knowledge, this strategy has been exploited for the first time by Nakamura \cite{Nak}, who considered the case of terminal quotient singularities. %We would like to thank him for explaining this idea to us. 
More concretely, we prove the following analogue of Huybrecht's theorem, which may be interesting on its own.

\begin{custom}[Theorem \ref{thm huybrechts}]
Let $X$ and $X'$ be projective symplectic varieties having crepant resolutions by irreducible symplectic manifolds and suppose that $\phi:X\ratl X'$ is a birational map which is an isomorphism in codimension $1$. Then $X$ and $X'$ are locally trivial deformations of one another.
\end{custom}

It is crucial for us that the deformations are locally trivial, as we want to relate the singularities of $X$ to those of $X'$. As far as we know, a singular version (with hypotheses different from ours) of Huybrechts' theorem has been proved and used in \cite[Proposition 4.2]{DV}, but it does not provide the local triviality of the deformations needed for our purposes.

%\noindent \textbf
\subsection*{Acknowledgments.} It is a pleasure to thank S\'ebastien Boucksom, St\'ephane Druel, James M\textsuperscript{c}Kernan and Yusuke Nakamura for helpful discussions. We thank Florin Ambro and Caucher Birkar for answering questions by e-mail.

C.L. was supported by the DFG through the research grant Le 3093/1-1 and enjoyed the hospitality of the IMJ, Paris.
G.P. was partially supported by the University of Strasbourg Institute for Advanced Study (USIAS), as part of a USIAS Fellowship, and by the 
 ANR project "CLASS'' no. ANR-10-JCJC-0111.

%------------------------------------------------------------------------------------------
%------------------------------------------------------------------------------------------
%------------------------------------------------------------------------------------------
\section{Minimal log discrepancies}\label{sec mld}
%------------------------------------------------------------------------------------------
%------------------------------------------------------------------------------------------

%------------------------------------------------------------------------------------------
\subsection{Notations and conventions}\label{notations}
%------------------------------------------------------------------------------------------
A {\it log pair} $(X,\Delta)$ consists of a normal variety $X$ and a $\mathbb R$-Weil divisor $\Delta\geq 0$ such that $K_X+\Delta$ is $\mathbb R$-Cartier. 

A {\it log resolution} of a log pair $(X,\Delta)$ is a projective birational morphism 
$\pi: \tilde X\to X$ such that $\tilde X$ is smooth and $\pi^* \Delta + Exc(\pi)$ has simple normal crossing support.

A birational morphism $f:\tilde X\to X$ between varieties for which $K_X$ and $K_{\tilde X}$ are well-defined is called {\it crepant} if $\pi^*K_X=K_{\tilde X}$.
A {\it crepant resolution} is a resolution of singularities which is also a crepant morphism. 

A {\it symplectic variety} is a normal projective variety $X$ admitting an everywhere non-degenerate closed 2-form $\omega$ on the regular locus $X_{reg}$ of $X$ such that, for any resolution $f:\tilde X \to X$ with $f^{-1}(X_{reg})\cong X_{reg}$ the $2$-form $\omega$ extends to a regular and closed 2-form on $\tilde X$.

%------------------------------------------------------------------------------------------
\subsection{Elementary properties of mlds}\label{subsec mld properties}
%------------------------------------------------------------------------------------------
If $(X,\Delta)$ is a log pair and $\pi:\tilde X\to X$ is a log-resolution of $(X,\Delta)$, then we define the log discrepancy $a(E,X,\Delta)$ for a divisor $E$ over $X$ by the formula
\[
K_{\tilde X} + \tilde \Delta = \pi^* (K_{X} +  \Delta) + \sum_{E \subset \tilde X}  (1-a(E,X,\Delta)) E,
\]
where $\tilde \Delta$ is the strict transform of $\Delta$.

Let $c_X(E)\in X$ be the center of a divisor over $X$. This is a not necessarily closed point of $X$. The \emph{minimal log discrepancy} at $x\in X$ is
\[
\mld(x,X,\Delta):=\inf_{c_X(E)=x}a(E,X,\Delta)
\]
and the minimal log discrepancy along a subvariety $Z\subset X$ is
\[
\mld(Z,X,\Delta):=\inf_{x\in Z}\mld(x,X,\Delta).
\]

Notice that from the definition we have that 
\begin{equation}\label{eq:reverse}
Z\subset Z' \Rightarrow \mld(Z,X,\Delta) \geq \mld(Z',X,\Delta).
\end{equation}
Frequently we will write $\mld(x)$ and $\mld(Z)$ if there is no danger of confusion.
We refer to \cite[\S 1]{Am99} for more details.
%------------------------------------------------------------------------------------------

We collect some basic facts about mlds.
\begin{lemma}\label{lemma mld under pullback}
Let $f:X\to Y$ be a proper birational morphism with $X$ normal and $\Q$-Gorenstein. Then
\[
\mld(W,Y,D) = \mld(\pi^{-1}(W), X, \pi^*D-K_{X/Y}).
\]
\end{lemma}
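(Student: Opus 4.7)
The key identity is
\[
K_X + (\pi^*D - K_{X/Y}) \;=\; \pi^*(K_Y + D),
\]
which follows immediately by unwinding $K_{X/Y} = K_X - \pi^*K_Y$. Writing $\Delta_X := \pi^*D - K_{X/Y}$, this reads $K_X + \Delta_X = \pi^*(K_Y + D)$ as $\R$-Cartier divisors. My plan is to use this identity to match log discrepancies valuation by valuation, and then pass to infima.

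First, let $E$ be any prime divisor over $X$ (equivalently, over $Y$, since $\pi$ is birational), and choose a common normal birational model $\mu : Z \to X$ on which $E$ appears as a prime divisor. Setting $h := \pi \circ \mu : Z \to Y$, the identity above gives
\[
\mu^*(K_X + \Delta_X) \;=\; h^*(K_Y + D),
\]
and applying the definition of log discrepancy on the common model $Z$ (see paragraph \ref{subsec mld properties}) one reads off $a(E, X, \Delta_X) = a(E, Y, D)$.

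Second, since $\pi$ is proper and birational, the centers of the divisorial valuation $E$ on $X$ and on $Y$ are related by $c_Y(E) = \pi(c_X(E))$, so $c_X(E) \in \pi^{-1}(W)$ if and only if $c_Y(E) \in W$. Thus the two infima
\[
\inf_{c_Y(E) \in W} a(E, Y, D) \quad \text{and} \quad \inf_{c_X(E) \in \pi^{-1}(W)} a(E, X, \Delta_X)
\]
are taken over the same collection of valuations with identical values, yielding the required equality $\mld(W,Y,D) = \mld(\pi^{-1}(W),X, \Delta_X)$.

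There is no serious obstacle here: once the identity $K_X + \Delta_X = \pi^*(K_Y + D)$ is in place, everything else is bookkeeping. The only point that would require a sentence of justification is that $\Delta_X$ is not necessarily effective, so strictly speaking $(X,\Delta_X)$ need not be a log pair in the sense of paragraph \ref{notations}; but the definitions of $a(E,\cdot,\cdot)$ and of $\mld$ extend verbatim to the $\R$-Cartier (not necessarily effective) setting, so this causes no issue.
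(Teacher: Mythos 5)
Your argument is correct. Note, however, that the paper does not actually prove this lemma: it is disposed of by a one-line citation to \cite[Proposition 1.3 (iv)]{EMY}, so your write-up is a self-contained substitute for that reference rather than a parallel to an argument in the text. What you give is precisely the standard proof of the cited fact: the crepant-pullback identity $K_X+\Delta_X=\pi^*(K_Y+D)$ forces $a(E,X,\Delta_X)=a(E,Y,D)$ for every divisorial valuation $E$ of the common function field (computed on any common model $Z$), and since $\pi$ is proper birational the center of $E$ on $Y$ is the image of its center on $X$, so the conditions $c_X(E)\in\pi^{-1}(W)$ and $c_Y(E)\in W$ cut out the same set of valuations; the two infima therefore coincide. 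Your final caveat is the right one to flag: $\Delta_X=\pi^*D-K_{X/Y}$ need not be effective, so $(X,\Delta_X)$ is a sub-pair rather than a log pair in the sense of paragraph \ref{notations}; as you say, the definitions of $a(E,\cdot,\cdot)$ and $\mld$ extend verbatim, and this is exactly the generality in which the lemma is used in the proof of Theorem \ref{thm lsc} (there $\pi$ is crepant, so $K_{X/Y}=0$ and the issue even disappears). The only cosmetic point worth fixing is the mismatch between $f$ in the hypothesis and $\pi$ in the displayed formula, which you have silently harmonized.
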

\begin{proof}
This is \cite[Proposition 1.3 (iv)]{EMY}.
\end{proof}
%------------------------------------------------------------------------------------------
For $k\in \N$ let us denote by $X^{(k)} \subset X$ the subset of points of dimension $k$.  endowed with the subspace topology. The dimension of a point $x\in X$ is defined to be the dimension of the Zariski closure of $x$.
\begin{lemma}\label{lemma mld finite}
The function $\mld:= \mld_{(X,\Delta)}: X^{(k)}\to \R\cup\{-\infty\}$ takes only finitely many values.
\end{lemma}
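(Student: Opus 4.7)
The plan is to reduce the statement to an SNC computation via a log resolution. Let $\pi : \tilde X \to X$ be a log resolution of $(X,\Delta)$, and define an $\R$-divisor $\Delta_{\tilde X}$ on $\tilde X$ by $K_{\tilde X} + \Delta_{\tilde X} = \pi^*(K_X + \Delta)$. Then $\supp(\Delta_{\tilde X})$ has simple normal crossings, though some coefficients may be negative along exceptional components, and $a(F,X,\Delta) = a(F,\tilde X, \Delta_{\tilde X})$ for every divisor $F$ over $X$. Since the center of $F$ on $X$ is the image under $\pi$ of its center on $\tilde X$, one obtains, for every point $x \in X$,
\[
\mld(x,X,\Delta) \;=\; \inf_{\tilde z \,\in\, \pi^{-1}(x)} \mld(\tilde z, \tilde X, \Delta_{\tilde X}),
\]
the infimum being taken over scheme-theoretic points $\tilde z$ of $\tilde X$ with $\pi(\tilde z) = x$; this is in the spirit of Lemma~\ref{lemma mld under pullback}, applied to points rather than to subvarieties.

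Next I would show that the function $\mld(-,\tilde X, \Delta_{\tilde X})$ takes only finitely many values on $\tilde X$. Fix $\tilde z \in \tilde X$, let $E_{i_1}, \ldots, E_{i_r}$ be the components of $\supp(\Delta_{\tilde X})$ containing $\tilde z$, and let $d_{i_j}$ denote the coefficient of $E_{i_j}$ in $\Delta_{\tilde X}$. Since $\tilde X$ is smooth and the $E_{i_j}$ meet transversally at $\tilde z$, the standard monomial valuation computation (cf.\ \cite[\S 1]{Am99}) yields
\[
\mld(\tilde z, \tilde X, \Delta_{\tilde X}) \;=\; \codim\, \overline{\{\tilde z\}} \,-\, \sum_{j=1}^r d_{i_j}
\]
when $d_{i_j} \leq 1$ for every $j$, and $\mld(\tilde z, \tilde X, \Delta_{\tilde X}) = -\infty$ otherwise. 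As the set of components of $\supp(\Delta_{\tilde X})$ is finite and the codimension takes at most $\dim \tilde X + 1$ values, the right-hand side ranges over a finite subset of $\R \cup \{-\infty\}$.

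Combining the two steps, $\mld(x,X,\Delta)$ is the infimum of elements of a finite set and so lies in that same finite set, independently of $x \in X$; a fortiori its restriction to $X^{(k)}$ takes only finitely many values. The one subtle ingredient is the monomial mld formula in the second step when $\Delta_{\tilde X}$ is not effective. Log discrepancies are however defined for any $\R$-divisor $D$ with $K+D$ being $\R$-Cartier, and the explicit linear optimisation over the simplicial cone of monomial valuations centred at $\tilde z$ proceeds unchanged in the presence of negative coefficients.
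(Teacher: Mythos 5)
Your argument is correct in substance, but note that the paper does not actually prove this lemma: its ``proof'' is a citation of \cite[Theorem 2.3]{Am99}. What you have written is essentially the standard proof of that cited result, so the comparison is between a citation and a genuine argument. Your route --- pass to a log resolution, define $\Delta_{\tilde X}$ by crepant pullback so that log discrepancies are preserved, identify $\mld(x,X,\Delta)$ with $\inf_{\tilde z\in\pi^{-1}(x)}\mld(\tilde z,\tilde X,\Delta_{\tilde X})$, and then observe that for an SNC (possibly non-effective) pair the mld at a scheme point depends only on $\codim\overline{\{\tilde z\}}$ and on which components pass through $\tilde z$ --- is exactly how one proves finiteness, and it has the advantage of making the paper self-contained at this point. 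The identification of the two infima is justified because the divisors over $X$ with centre $x$ are precisely the divisors over $\tilde X$ with centre in $\pi^{-1}(x)$, with equal log discrepancies; since $\pi^{-1}(x)\neq\emptyset$ and the ambient set of values is finite, the infimum is a minimum and lies in that finite set, as you say.

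One small inaccuracy, which does not damage the conclusion: your dichotomy ``$\mld(\tilde z,\tilde X,\Delta_{\tilde X})=-\infty$ as soon as some $d_{i_j}>1$'' fails at codimension-one points. With the paper's definition $\mld(\tilde z)=\inf_{c(E)=\tilde z}a(E)$, the only divisor over the smooth $\tilde X$ whose centre is the generic point of a prime divisor $E_{i_1}$ is $E_{i_1}$ itself (exceptional divisors of proper birational morphisms to a normal variety have centres of codimension $\geq 2$), so the value there is $1-d_{i_1}$, which is finite even when $d_{i_1}>1$. This only adds the finitely many numbers $1-d_i$ to your list of attainable values, so finiteness on each $X^{(k)}$ --- indeed on all of $X$ --- still holds, and the identity $\mld(x,X,\Delta)=\inf_{\tilde z\in\pi^{-1}(x)}\mld(\tilde z,\tilde X,\Delta_{\tilde X})$ is unaffected since it is proved directly from the matching of divisorial valuations rather than from the explicit formula.
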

\begin{proof}
This is \cite[Theorem 2.3]{Am99}.
\end{proof}
%------------------------------------------------------------------------------------------

%------------------------------------------------------------------------------------------
\subsection{Conjectures about mlds}\label{subsec mld conjectures}
%------------------------------------------------------------------------------------------
Shokurov has made the two following conjectures about mlds in \cite{Sh03}.
%------------------------------------------------------------------------------------------
\begin{conjecture}\label{hypo acc}
{\bf{(ACC)}} Let $\Gamma\subset [0,1]$ be a DCC-set, {\it i.e.}  all decreasing sequences in $\Gamma$ become eventually constant. For a fixed integer $k$ the set
\[
\Omega_k:= \left\{ \mld(Z,X,\Delta)\middle|
\begin{array}{l}
\dim X = k,\\ 
(X,\Delta) \textrm{ log pair}\\
%K_X+\Delta \ \R\textrm{-Cartier},\\
Z\subset X \textrm{ closed subvariety}\\
\coeff(\Delta)\in \Gamma
\end{array}
\right\}
\]
is an ACC-set, that is, every increasing sequence $\alpha_1 \leq \alpha_2 \leq \ldots$ in $\Omega_k$ eventually becomes stationary.
\end{conjecture}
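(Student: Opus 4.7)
The statement as written is Shokurov's ACC conjecture in full generality, which is one of the central open problems on singularities of pairs. Any plan of attack is necessarily speculative, but the natural strategy is induction on $k=\dim X$. The base case $k=2$ is Alexeev's theorem on mlds of surface log pairs (following Shokurov). For the inductive step my plan is to reduce ACC in dimension $k$ to two ingredients: (i) ACC for log canonical thresholds in dimension $k$, which is the theorem of Hacon--McKernan--Xu, and (ii) ACC for mlds in dimension $k-1$, which is the inductive hypothesis.

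Given a hypothetical strictly increasing sequence $\alpha_i=\mld(Z_i,X_i,\Delta_i)\in\Omega_k$ with $\coeff(\Delta_i)\subset\Gamma$, the first step is to reduce to the case where each $Z_i$ is the center of a prime divisor $E_i$ over $X_i$ that computes $\alpha_i$: Lemma \ref{lemma mld finite} ensures that on each fixed $(X_i,\Delta_i)$ only finitely many mld-values occur, so after passing to a subsequence one can extract such an $E_i$ via a plt blow-up of $(X_i,\Delta_i+cE_i)$ for an appropriate boundary coefficient $c$. Then I would apply adjunction: Shokurov's different formula expresses $\Delta_{E_i}:=(K_{X_i}+\Delta_i+E_i)|_{E_i}^{\mathrm{diff}}$ with coefficients in the set $\bigl\{\tfrac{m-1+\sum n_j\gamma_j}{m}:m,n_j\in\N,\gamma_j\in\Gamma\bigr\}$, which is again DCC. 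By the inductive hypothesis, the mlds of $(E_i,\Delta_{E_i})$ form an ACC set, and inversion of adjunction identifies these values (up to known corrections) with the $\alpha_i$. In parallel, ACC for log canonical thresholds applied to $(X_i,\Delta_i;E_i)$ would control the coefficient of $E_i$ in $\pi_i^*(K_{X_i}+\Delta_i)$, and hence pin down $\alpha_i$ from the other direction.

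The main obstacle, and the reason the conjecture is open, is precisely the extraction step. For the adjunction argument to deliver usable lower-dimensional data one must extract $E_i$ from a \emph{bounded} family of birational models $\pi_i:X_i'\to X_i$, equivalently one needs singularity germs with bounded mld to form a bounded family. In the absence of Shokurov's LSC conjecture and of such a boundedness-of-extractions statement, the plan above only reduces ACC in dimension $k$ to a still-conjectural boundedness input rather than closing the loop. This is exactly the gap that is circumvented in the symplectic setting of the present paper: the Huybrechts-type Theorem \ref{thm huybrechts} will show that the relevant germs do not vary at all along a log-MMP of a symplectic manifold, reducing the ACC input needed for Shokurov's termination strategy to a statement provable directly from Kawakita's work, without attacking the general conjecture.
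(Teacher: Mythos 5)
The statement you were asked about is not a theorem of the paper at all: it is Shokurov's ACC conjecture, stated verbatim as Conjecture \ref{hypo acc}, and the authors give no proof of it anywhere -- indeed they say explicitly that ACC and LSC ``seem to be out of reach for arbitrary varieties.'' So there is no paper proof to compare against, and your proposal, which is openly a speculative plan rather than an argument, cannot be graded as a correct proof of the statement. What you wrote is a reasonable summary of the standard line of attack (induction on dimension, Alexeev's surface case, extraction of a divisor computing the mld, Shokurov's different and adjunction to pass to dimension $k-1$, ACC for log canonical thresholds as auxiliary input), and you correctly put your finger on the recognized obstruction: one has no boundedness statement for the models $\pi_i\colon X_i'\to X_i$ extracting the divisors $E_i$, so the adjunction step does not deliver data ranging over a fixed DCC set in lower dimension, and the induction does not close. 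Since the conjecture is genuinely open, flagging this gap honestly is the right outcome; just be aware that even the reduction steps you sketch need care (the plt blow-up extraction requires klt-type hypotheses, and $\mld$ can be $-\infty$ outside the lc locus, where no divisor ``computes'' it).

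Your final paragraph also matches the paper's actual use of the statement: the authors never need the full conjecture, only ACC for the specific sequence of values $\mld(E_i,X_i,\Delta_i)$ arising along the flip sequence. They obtain this by combining Theorem \ref{thm huybrechts} (all the $X_i$ in a sequence of flips are locally trivial deformations of one another, so their closed-point germs do not change) with Kawakita's finiteness of the set $M_\Gamma(X)$ of mlds at closed points on a fixed variety with coefficients in a fixed finite set, and with Remark \ref{remark lsc}, which reduces the ACC input in Shokurov's termination criterion to mlds at closed points. So your reading of how the paper circumvents Conjecture \ref{hypo acc} is accurate; the only thing to correct in your write-up is any impression that a proof of the conjecture itself was being attempted or expected.
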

%------------------------------------------------------------------------------------------
\begin{conjecture}\label{hypo lsc}
{\bf{(LSC)}} Let $X$ be a normal $\Q$-Gorenstein variety and let $\Delta$ be an $\R$-Weil divisor on $X$ such that $K_X+\Delta$ is $\R$-Cartier. Then for each $d$ the function $\mld_{(X,\Delta)}: X^{(d)} \to \R \cup \{-\infty\}$ is lower semi-continuous.
\end{conjecture}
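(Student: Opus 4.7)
The statement amounts to showing that for each $c \in \R$ the set $\{x \in X^{(d)} : \mld(x,X,\Delta) \leq c\}$ is closed in the Zariski subspace topology of $X^{(d)}$; by Lemma~\ref{lemma mld finite} only finitely many values of $c$ are relevant. The natural plan is to transfer the problem to a smooth model through a log resolution, and then to descend.

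Fix a log resolution $f\colon \tilde X \to X$ of $(X,\Delta)$ and set $\tilde\Delta := f^*\Delta - K_{\tilde X/X}$. A direct computation with the definition of log discrepancy shows that $\tilde\Delta$ equals the strict transform of $\Delta$ plus $\sum_E \bigl(a(E,X,\Delta)-1\bigr)\,E$, the sum running over the $f$-exceptional prime divisors. By Lemma~\ref{lemma mld under pullback},
\[
\mld(x, X, \Delta) \;=\; \mld\bigl(f^{-1}(x),\, \tilde X,\, \tilde\Delta\bigr) \;=\; \inf_{y \in f^{-1}(x)}\, \tilde m(y),
\]
where $\tilde m(y) := \mld(y, \tilde X, \tilde\Delta)$. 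Granting LSC of $\tilde m$ on the smooth $\tilde X$, the descent is formal: for any $c$ the level set $\{\tilde m \leq c\}$ is Zariski-closed in $\tilde X$, its image under the proper map $f$ is Zariski-closed in $X$, and (using Lemma~\ref{lemma mld finite} to ensure the fibrewise infima are attained) this image equals $\{x \in X : \mld(x,X,\Delta) \leq c\}$. Intersecting with $X^{(d)}$ gives the required closed set.

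Everything therefore reduces to establishing LSC on the smooth variety $\tilde X$ for the $\R$-divisor $\tilde\Delta$. For \emph{effective} $\R$-boundary divisors on a smooth variety, this is the theorem of Ein-Mustata-Yasuda \cite{EMY}, whose proof interprets $\mld(y, \tilde X, \tilde\Delta)$ as an infimum over codimensions of contact loci in the arc space of $\tilde X$ and exploits their constructibility. The main obstacle is that $\tilde\Delta$ is in general \emph{not} effective: its exceptional coefficient $a(E,X,\Delta)-1$ is negative whenever $a(E,X,\Delta) < 1$, which inevitably occurs as soon as $(X,\Delta)$ has non-canonical singularities. To push the jet-scheme argument through, one would have to extend the contact-locus formula to signed $\R$-divisors, for instance by decomposing $\tilde\Delta = \tilde\Delta_+ - \tilde\Delta_-$ with both parts effective and expressing the log discrepancy in terms of contact loci for $\tilde\Delta_+$ corrected by the orders of vanishing along the components of $\tilde\Delta_-$. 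This extension is where the genuine difficulty lies, and is the reason why, as the authors note, Shokurov's LSC conjecture remains open for arbitrary varieties.
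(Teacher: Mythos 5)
The statement you were asked to prove is Conjecture~\ref{hypo lsc}, Shokurov's LSC conjecture. The paper does not prove it and does not claim to: it is explicitly presented as an open conjecture, stated only so that the known special cases can be quoted. What the paper actually establishes is the descent statement Theorem~\ref{thm lsc} (LSC passes from $X$ to $Y$ along a \emph{crepant} morphism $\pi\colon X\to Y$), which combined with the Ein--Musta\c{t}\u{a}--Yasuda theorem \cite{EMY} for smooth varieties yields Corollary~\ref{cor lsc} for varieties admitting a crepant resolution. So there is no ``paper's own proof'' to compare against, and any complete proof you produced would be a new result.

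Your proposal, read as a proof, has a genuine gap, and to your credit you identify it yourself in the last paragraph. The reduction to a log resolution $f\colon \tilde X\to X$ replaces $\Delta$ by $\tilde\Delta = f^*\Delta - K_{\tilde X/X}$, whose exceptional coefficients $a(E,X,\Delta)-1$ are negative whenever $(X,\Delta)$ is not canonical along $E$; the theorem of \cite{EMY} applies only to effective (nonnegative) boundaries on smooth varieties, so the key input is simply unavailable, and extending the arc-space argument to signed divisors is precisely the open content of the conjecture. This is exactly why the paper's Theorem~\ref{thm lsc} insists on a \emph{crepant} morphism: there $K_{X/Y}=0$, the pulled-back boundary stays effective, and \cite{EMY} applies upstairs. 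Two further cautions about your descent step, should you ever have LSC upstairs: first, Lemma~\ref{lemma mld under pullback} computes $\mld$ \emph{along} a subvariety, i.e.\ $\inf_{x'\in W}\mld(x')$, which by \eqref{eq:reverse} can be strictly smaller than $\mld$ at the generic point of $W$, so the identity $\mld(x,X,\Delta)=\inf_{y\in f^{-1}(x)}\tilde m(y)$ for non-closed $x$ needs justification; second, the fiber $f^{-1}(x)$ of a point of dimension $d$ contains points of various dimensions, so the image of the closed set $\{\tilde m\le c\}\cap \tilde X^{(d)}$ does not directly give the level set in $X^{(d)}$ --- the paper's proof of Theorem~\ref{thm lsc} has to work around exactly this via the fiber-dimension argument with the sets $C_d$ and $\overline{C}_d$. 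In short: the formal descent is more delicate than you present it, and the core reduction fails because effectivity is lost.
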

%------------------------------------------------------------------------------------------
\begin{remark}\label{remark lsc} If LSC holds on $X$, then for each $a\in \R$ and $d\in\N$ the set
\begin{equation}\label{eq lsc implies}
X_{\leq a}^{(d)} :=\{x \in X^{(d)}\mid \mld(x)\leq a\}
\end{equation}
is closed, that is, there is, $X_{\leq a}^{(d)} = X_{\leq a} \cap X^{(d)}$ where $X_{\leq a}$ is the closure of $X_{\leq a}^{(d)}$ in $X$. Moreover, $X_{a}^{(d)} :=\{x \in X^{(d)}\mid \mld(x)= a\}$ is open in $X_{\leq a}^{(d)}$. All this follows directly from Lemma \ref{lemma mld finite} which together with the lower semi-continuity implies that for $x\in X^{(d)}$ there is an open neighbourhood $U \subset X^{(d)}$ of $x$ such that
\[
\forall x'\in U: \ \mld(x)\leq \mld(x').
\]
It is well-known and an easy consequence of \cite[Prop. 2.5]{Am99} that lower semi-continuity is equivalent to $\mld: X^{(0)}\to \R\cup\{-\infty\}$ being lower semi-continuous. Moreover, by loc. cit. one also sees that ACC holds, as soon as
\[
\Omega_k^{(0)}:= \left\{ \mld(x,X,\Delta)\middle|
\begin{array}{l}
\dim X = k,\\ 
K_X+\Delta \ \Q\textrm{-Cartier},\\
x\in X \textrm{ closed point}\\
\coeff(\Delta)\in \Gamma
\end{array}
\right\}
\]
is an ACC set.
\end{remark}
%------------------------------------------------------------------------------------------
Next we show that LSC descends along crepant morphisms. 
%------------------------------------------------------------------------------------------
\begin{theorem}\label{thm lsc}
Let $Y$ be a normal projective $\Q$-Gorenstein variety and let $\Delta$ be an effective $\R$-Cartier divisor on $Y$ such that $(Y,\Delta)$ is log-canonical. If $\pi:X\to Y$ is a proper, crepant morphism and LSC holds on $X$, then
\[
\mld: Y^{(0)} \to \R \cup \{\infty\}
\]
is lower semi-continuous.
\end{theorem}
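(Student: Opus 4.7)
My plan is first to invoke Lemma~\ref{lemma mld under pullback} for the crepant morphism $\pi$: since $K_{X/Y}=0$, for every closed point $y\in Y^{(0)}$ one has
\[
\mld(y,Y,\Delta)=\mld(\pi^{-1}(y),X,\pi^*\Delta)=\inf_{x\in\pi^{-1}(y)}\mld(x,X,\pi^*\Delta),
\]
and by Lemma~\ref{lemma mld finite} the function $\mld_{(X,\pi^*\Delta)}$ takes only finitely many values on each stratum of $X$, so the infimum is realised as a minimum at some point $x^{\ast}\in\pi^{-1}(y)$, possibly non-closed. This step reduces the entire problem to a statement about $X$ together with the proper map $\pi$.

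Next, fix $a\in\R$. In view of Remark~\ref{remark lsc} it suffices to produce a closed subset $C\subset Y$ with $C\cap Y^{(0)}=\{y\in Y^{(0)}\mid\mld(y)\leq a\}$. The LSC hypothesis on $X$ provides, for each $d$, a closed subset $\overline{X_{\leq a}^{(d)}}\subset X$ satisfying $\overline{X_{\leq a}^{(d)}}\cap X^{(d)}=X_{\leq a}^{(d)}$, and properness of $\pi$ then yields that each image $\pi\bigl(\overline{X_{\leq a}^{(d)}}\bigr)$ is closed in $Y$. The natural candidate is therefore
\[
C:=\bigcup_{d=0}^{\dim X}\pi\bigl(\overline{X_{\leq a}^{(d)}}\bigr),
\]
a finite union of closed sets. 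The inclusion $\{y\in Y^{(0)}:\mld(y)\leq a\}\subset C$ is immediate: the minimum-attaining $x^{\ast}\in\pi^{-1}(y)$ lies in some $X_{\leq a}^{(d)}\subset\overline{X_{\leq a}^{(d)}}$, so $y=\pi(x^{\ast})\in\pi\bigl(\overline{X_{\leq a}^{(d)}}\bigr)\subset C$.

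The reverse inclusion $C\cap Y^{(0)}\subset\{\mld\leq a\}$ is the step I expect to be the main obstacle. Given $y\in C$ closed, one needs to produce a point $\tilde x\in\pi^{-1}(y)$ with $\mld(\tilde x)\leq a$; merely choosing a closed point of $\overline{X_{\leq a}^{(d)}}\cap\pi^{-1}(y)$ is not enough, because closures in $X$ can add points lying in other strata whose mld is not controlled by $a$. The failure is already visible in the smooth case, where the closure in $X$ of the generic points of the prime divisors is all of $X$. To circumvent this I would refine $C$ by replacing, in each stratum, $\overline{X_{\leq a}^{(d)}}$ with the closure of $\{x\in X_{\leq a}^{(d)}\mid \pi(\overline{\{x\}})\text{ is a closed point of }Y\}$, and use upper semi-continuity of fibre dimensions together with the crepant birational structure of $\pi$ to check that this refined locus is still closed in $X$ and that its image under $\pi$ recovers exactly $Y^{(0)}_{\leq a}$. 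The finiteness provided by Lemma~\ref{lemma mld finite} ensures that across the finitely many relevant dimensions this stratified bookkeeping remains tractable.
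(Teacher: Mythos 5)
Your proposal is correct and is essentially the paper's proof: the paper's key set is $C_d=\{x\in X^{(d)}\mid \dim\pi(x)=0,\ \mld(x)\le b\}$, i.e.\ exactly your refined locus of points whose image in $Y$ is a closed point, and the verification you sketch at the end --- every irreducible component of the closure of this set has relative dimension $\ge d$ over its image by upper semi-continuity of fibre dimension, so over each closed point of the image one finds a $d$-dimensional point of the closure, which has $\mld\le a$ by LSC on $X$ and lies in the fibre --- is precisely how the paper concludes. The only cosmetic difference is that the paper phrases lower semi-continuity via a neighbourhood of a fixed $y$ and the largest attained value $b<\mld(y)$ rather than via sublevel sets, which is equivalent by the finiteness of the values of $\mld$.
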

\begin{proof}
Let us fix a closed point $y\in Y$ and denote $W:=\pi^{-1}(y)$. By Lemma \ref{lemma mld under pullback} we have
\begin{equation}\label{eq mld}
\mld(y,Y,D)=\mld(\pi^{-1}(y),X,\pi^*D).
\end{equation}
We have to show that there is an open neighbourhood $U\subset Y$ of $y$ such that
\[
\mld(y') \geq \mld(y)\quad \forall y'\in U.
\]
To this end we spot the ``bad'' subsets of $Y$. By Lemma \ref{lemma mld finite} the function $\mld$ takes only finitely many values. If $a:=\mld(y)$ is the smallest $\mld$ on $Y^{(0)}$, then there is nothing to prove. Otherwise let us denote by $b$ the maximal $\mld$ on $Y$ with $b<a$. In view of \eqref{eq mld}, the search for mlds smaller than $a$ can be carried out on $X$, but at the price of having to take into account not only closed points. Consider for each $0\leq d \leq n:=\dim X$ the set
\[
C_d:= \{x \in X^{(d)}\mid \dim \pi(x) = 0, \mld(x)\leq b\}.
\]
Let $\overline {C}_d$ denote the Zariski closure of $C_d$ in $X$. By assumption, (LSC) holds on $X$ and hence all $x\in \overline {C}_d$ with $\dim x = d$ satisfy $\mld(x) \leq b$. Now we set
\[
U:=Y\ohne \bigcup_{d=0}^n\pi(\overline {C}_d),
\]
where $n=\dim (X)$.
As $\pi$ is proper, $U$ is open. We will consecutively prove the following claims.
\begin{enumerate}
	\item Every irreducible component of $\overline {C}_d$ has relative dimension at least $d$ over its image.
	\item $y\in U$.
	\item $\mld(y')\geq \mld(y)$ for all $y'\in U$.
\end{enumerate}
Let $\Sigma$ denote an irreducible component of $\overline {C}_d$ for some $d$. As $\overline {C}_d$ is the closure of $C_d$, the set $C_d\cap \Sigma$ is not empty. Thus, the set $\Sigma_{\geq d}:=\{x\in \Sigma\mid \dim \pi^{-1}\pi(x) \geq d \}$ is not empty. By the upper semi-continuity of the fiber dimension \cite[Corollaire 13.1.5]{EGAIV3},  $\Sigma_{\geq d}$ is closed and by definition we have $\Sigma_{\geq d} \supset \Sigma \cap C_d$. Therefore, as $\Sigma$ is a component of the closure of $C_d$ we have $\Sigma_{\geq d} = \Sigma$ and the first claim follows.

Suppose that $y\not \in U$. Then we would have a point $x \in \overline {C}_d$ for some $d$ with $\pi(x)=y$. By the previous statement, we have $\dim (W\cap \overline {C}_d) \geq d$, where, we recall, $W=\pi^{-1}(y)$. This implies that there is $x'\in W\cap \overline {C}_d$ with $\dim x' = d$ and hence $\mld(x') \leq b$ by the definition of $B_d$. But then, by (\ref{eq:reverse})
\[
a = \mld(y) \leq \mld(x') \leq b
\]
contradicting the choice of $b < a$. Thus $y\in U$.

Now if there were some $y'\in U$ with $\mld(y')< \mld(y)=a$ it would also be $\leq b$ by the maximality of $b$. Let $x \in \pi^{-1}(y')$ be a point with $\mld(x)= \mld(y')$ and denote $d:=\dim(x)$. This would imply $x\in C_d \subset \overline {C}_d$ contrarily to the assumption $y'\in U$. This concludes the proof of the theorem.
\end{proof}
%------------------------------------------------------------------------------------------
By \cite[Thm. 0.3]{EMY}, LSC holds on smooth varieties. This immediately yields
\begin{corollary}\label{cor lsc}
Let $Y$ be a normal projective $\Q$-Gorenstein variety  possessing a crepant resolution of singularities. Let $\Delta$ be an $\R$-Weil divisor on $Y$ such that $K_Y+\Delta$ is $\R$-Cartier.  Then the function $\mld_{(Y,\Delta)}$ is lower semi-continuous.
\qed
\end{corollary}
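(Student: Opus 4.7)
The plan is to reduce the statement immediately to Theorem \ref{thm lsc}. First I would fix a crepant resolution $\pi: X \to Y$ provided by hypothesis; then $X$ is smooth by definition, and since $\pi^* K_Y = K_X$ the morphism $\pi$ is a proper crepant morphism in the precise sense required by Theorem \ref{thm lsc}. Next I would invoke the main result of Ein--Mustata--Yasuda \cite[Thm.~0.3]{EMY}, which asserts that LSC holds on every smooth variety; this applies in particular to the pair $(X, \pi^*(K_Y+\Delta) - K_X)$, giving LSC on $X$. With both hypotheses of Theorem \ref{thm lsc} in place, I would apply it verbatim to obtain lower semi-continuity of $\mld_{(Y,\Delta)}$ on closed points of $Y$. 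Finally, by the observation recorded in Remark \ref{remark lsc}, lower semi-continuity on $Y^{(0)}$ is equivalent to lower semi-continuity of $\mld_{(Y,\Delta)} : Y^{(d)} \to \R \cup \{-\infty\}$ for every $d$, which is exactly what the corollary asserts.

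The one place where a little care is required, and the part I would check most carefully, is that Theorem \ref{thm lsc} is stated under the additional assumptions that $\Delta$ is effective and $(Y,\Delta)$ is log-canonical, whereas the corollary asks only that $K_Y+\Delta$ be $\R$-Cartier. I expect this to be a mild bookkeeping matter rather than a genuine obstacle: the non-lc locus of $(Y,\Delta)$ is closed, on this locus $\mld \equiv -\infty$ so lower semi-continuity is automatic, and on its complement $(Y,\Delta)$ is lc so Theorem \ref{thm lsc} applies directly. Consequently the entire proof should amount to a one-line appeal to Theorem \ref{thm lsc} together with \cite[Thm.~0.3]{EMY}, and I do not anticipate any substantive technical difficulty.
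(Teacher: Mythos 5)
Your proposal is correct and follows exactly the paper's route: the paper's entire proof is the observation that \cite[Thm.~0.3]{EMY} gives LSC on the smooth crepant resolution, whence Theorem \ref{thm lsc} applies. Your extra care about the mismatch in hypotheses (the corollary drops effectiveness and log-canonicity, which Theorem \ref{thm lsc} formally assumes) is a point the paper silently glosses over, and your resolution via the closed non-lc locus where $\mld \equiv -\infty$ is the right way to dispose of it.
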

%------------------------------------------------------------------------------------------

%------------------------------------------------------------------------------------------
%------------------------------------------------------------------------------------------
%------------------------------------------------------------------------------------------
\section{Deformations}\label{sec defo}
%------------------------------------------------------------------------------------------
%------------------------------------------------------------------------------------------
%------------------------------------------------------------------------------------------
We work over the field of complex numbers. Recall that an irreducible symplectic manifold is by definition a compact simply connected K\"ahler manifold $X$ such that $H^0(X,\Omega_X^2)$ is generated by a holomorphic symplectic form.
In this section we are going to prove the following result which should be interpreted as an analogue of the well-known result of Huybrechts \cite[Thm. 2.5]{Huy}. The proof relies on  Namikawa's work \cite{Na01,Na06,Na10}. This is the only section where we make use of the complex numbers. This however does not seem to happen in an essential way and results as well as proofs should carry over mutatis mutandis to any algebraically closed field of characteristic zero.

\begin{theorem}\label{thm huybrechts}
Let $X$ and $X'$ be projective symplectic varieties having crepant resolutions by irreducible symplectic manifolds and suppose that $\phi:X\ratl X'$ is a birational map which is an isomorphism in codimension $1$. Then $X$ and $X'$ are locally trivial deformations of one another.
\end{theorem}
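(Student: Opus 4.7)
The plan is to lift the birational map $\phi$ to the crepant resolutions, apply Huybrechts' classical theorem there, and then descend the resulting deformation family across the resolution maps using Namikawa's deformation theory for symplectic varieties. Let $\pi:\tilde X\to X$ and $\pi':\tilde X'\to X'$ denote the given crepant resolutions, and consider the birational map $\tilde\phi := (\pi')^{-1}\circ\phi\circ\pi:\tilde X\ratl\tilde X'$. Since $\tilde X$ and $\tilde X'$ are birational irreducible symplectic manifolds, a classical result (consequence of $K_{\tilde X}=K_{\tilde X'}=0$ plus the presence of a symplectic form) ensures that $\tilde\phi$ is an isomorphism in codimension one. Applying Huybrechts' theorem \cite[Thm.~2.5]{Huy} to the pair $(\tilde X,\tilde X')$ then yields two smooth projective families $\tilde{\mathcal X}\to B$ and $\tilde{\mathcal X}'\to B$ over a smooth pointed curve $(B,0)$, with central fibers $\tilde X$ resp.\ $\tilde X'$, together with an isomorphism of their restrictions to $B\setminus\{0\}$ whose limit over $0$ realizes $\tilde\phi$.

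The heart of the proof is to descend these smooth families across the resolution maps. Here I would invoke Namikawa's deformation theory for projective symplectic varieties admitting a symplectic resolution \cite{Na06,Na10}: the Kuranishi functor $\mathrm{Def}(\tilde X)$ is smooth, the functor $\mathrm{Def}^{lt}(X)$ of locally trivial deformations of $X$ is smooth, and the natural morphism $\mathrm{Def}(\tilde X)\to\mathrm{Def}^{lt}(X)$ is finite. Geometrically, any small deformation of $\tilde X$ comes equipped with a simultaneous contraction to a corresponding locally trivial deformation of $X$. Applied to $\tilde{\mathcal X}\to B$, this produces a family $\mathcal X\to B$ together with a relative contraction $\tilde{\mathcal X}\to\mathcal X$ restricting to $\pi$ on the central fiber, and $\mathcal X\to B$ is a locally trivial deformation with $\mathcal X_0\cong X$. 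The same procedure applied to $\tilde{\mathcal X}'$ produces a locally trivial deformation $\mathcal X'\to B$ with $\mathcal X'_0\cong X'$; uniqueness of the contraction (determined by the exceptional locus of the relative symplectic resolution) identifies $\mathcal X|_{B\setminus\{0\}}\cong\mathcal X'|_{B\setminus\{0\}}$.

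Concatenating the two locally trivial families over the common open set $B\setminus\{0\}$ exhibits $X$ and $X'$ as locally trivial deformations of one another: picking a point $b\neq 0$, the restriction of $\mathcal X$ to a small arc from $0$ to $b$ is a locally trivial family from $X$ to $\mathcal X_b$, and the restriction of $\mathcal X'$ to an arc from $b$ to $0$ continues this, ending at $X'$, with $\mathcal X_b\cong\mathcal X'_b$ identified by the descent of the Huybrechts isomorphism. The principal obstacle is the descent step: proving that a smooth deformation of the symplectic manifold $\tilde X$ necessarily contracts to a \emph{locally trivial} (rather than Poisson-smoothing) deformation of $X$. This is precisely the content of Namikawa's analysis of the deformation functors of a symplectic resolution, in which projectivity together with the symplectic form rigidifies the singularities along the family so that only locally trivial deformations survive.
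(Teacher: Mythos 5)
Your overall architecture (lift $\phi$ to the resolutions, deform there, descend) is the right one and matches the paper's, but the descent step as you describe it contains a genuine gap, and it is exactly the point where the real work lies. It is \emph{not} true that an arbitrary small deformation of $\tilde X$ contracts to a locally trivial deformation of $X$. Namikawa's finite map goes $\Pi:\Def(\tilde X)\to\Def(X)$, where the target is the space of \emph{all} deformations of $X$; for a generic direction the classes $[D_i]$ of the $\pi$-exceptional components do not stay of type $(1,1)$, the divisors do not deform, and the corresponding deformation of $X$ is a \emph{smoothing}, the opposite of locally trivial. This is fatal for your plan because Huybrechts' theorem, used as a black box, produces families $\tilde{\mathcal X},\tilde{\mathcal X}'\to B$ over a \emph{generic} arc (genericity is needed in his proof to make the birational map an isomorphism over $B\setminus\{0\}$), so the families you feed into the descent step are precisely of the bad kind: $\mathcal X_b$ would be smooth for $b\neq 0$ while $\mathcal X_0=X$ is singular.

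The fix, which is what the paper does, is to work only inside the locus $\Def(\tilde X,\underline D)\subset\Def(\tilde X)$ over which all exceptional components $D_i$ deform along with $\tilde X$, and to prove the nontrivial statement that $\Pi$ maps this locus \emph{onto} the space $\Def^{\lt}(X)$ of locally trivial deformations of $X$, both being smooth of the same dimension $h^{1,1}(\tilde X)-m$. The smoothness of $\Def^{\lt}(X)$ is the technical heart: it is obtained by a $T^1$-lifting argument using $H^1(T_X)\cong H^1(T_U)\cong \mathbb{H}^2(U,j_*\Omega^{\geq 1}_{X^{\reg}})$ on the open set $U$ of Proposition \ref{proposition kaledin}, and the dimension count uses $h^1(T_X)=h^1(T_{\tilde X})-m$ (Proposition \ref{proposition namikawa}). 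Your appeal to ``rigidification of the singularities by the symplectic form'' asserts the conclusion without this argument. Once the correct deformation spaces are in place, one cannot simply quote Huybrechts either: his argument has to be rerun \emph{within} $\Def(\tilde X,\underline D)\cong\Def(\tilde X',\underline D)$ (via local Torelli), and the isomorphism of general fibers downstairs is obtained from the fact that the general projective member of the locally trivial family has Picard number one, the two resolutions contracting the same divisors. So the route is the same in spirit, but the proposal is missing the one genuinely new ingredient of the theorem.
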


The essential ingredient is the smoothness of certain deformation spaces, which is obtained by invoking Ran's $T^1$-lifting principle \cite{Ran,Ka1,Ka2}. Essentially it says that a given deformation problem is unobstructed if the the tangent space $T^1_X$ to the deformation space is ``deformation invariant'' in the sense that its relative version $T^1_{\sX/S}$ is a free $\sO_S$-module for every small deformation $\sX\to S$ of $X$. We refer to \cite[4.13]{L} or  \cite[\S 14]{GHJ} for a concise account. Recall (e.g. from \cite[1.2]{Ser}) that the tangent space to the deformation functor $\Def^\lt_X$ of locally trivial deformations of an algebraic variety $X$ is $H^1(T_X)$, opposed to arbitrary deformations, where the tangent space is $\Ext^1(\Omega_X,\sO_X)$. An obstruction space for $\Def^\lt_X$ is given by $H^2(T_X)$.

Let us recall the following well-known result on the local structure of singular symplectic varieties. For convenience we sketch the proof which is due to Kaledin and Namikawa, see \cite{Na10}.

\begin{proposition}\label{proposition kaledin}
Let $X$ be a symplectic variety. Then there is an open subset $U\subset X$ such that $\codim_X (X\ohne U) \geq 4$ and every $x\in U$ has a neighbourhood which is locally analytically isomorphic to $(\C^{2n-2},0)\times (S,p)$ where $2n=\dim X$ and $(S,p)$ is the germ of a rational double point on a surface. This isomorphism can be chosen to preserve the symplectic structure.
\end{proposition}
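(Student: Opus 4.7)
The plan is to take $U$ to be the union of the smooth locus of $X$ with the generic loci of the codimension-$2$ components of $\Sing(X)$. Write $\Sigma:=\Sing(X)$ and invoke Kaledin's theorem that every irreducible component of $\Sigma$ has even codimension in $X$. If $Z_1,\ldots,Z_r$ denote the codimension-$2$ components of $\Sigma$ and $Z_i^\circ\subset Z_i$ is the Zariski-open subset of smooth points of $Z_i$ that avoid every other component of $\Sigma$, put
\[
U := X_{\reg} \;\cup\; \bigcup_{i=1}^{r} Z_i^\circ.
\]
Then $X\ohne U$ is contained in $\Sing(\Sigma)\cup \Sigma_{\geq 4}$, each of which has codimension $\geq 4$ in $X$.

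Next, fix $p\in Z_i^\circ$ and cut out an analytic transverse slice $(S,p)\subset (X,p)$ of dimension $2$, obtained by intersecting $X$ with a smooth analytic germ of codimension $2n-2$ meeting $Z_i$ transversally at $p$. Since $S\cap \Sigma=\{p\}$, the germ $(S,p)$ is a normal surface singularity with an isolated singular point at $p$. The restriction $\omega|_{S_\reg}$ is a closed holomorphic $2$-form which is non-degenerate near $p$ by transversality (combined with Kaledin's symplectic stratification) and which extends holomorphically to any resolution, for instance to the strict transform of $S$ in a log resolution of $X$. Hence $(S,p)$ is a $2$-dimensional symplectic singularity, and by Beauville's classification it is a Du Val singularity.

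To upgrade this transverse description to a local analytic product decomposition I would follow Kaledin--Namikawa. Darboux's theorem on the complex manifold $Z_i^\circ$ provides local analytic coordinates $x_1,\ldots,x_{2n-2}$ at $p$ with $\omega|_{Z_i^\circ} = \sum_{j=1}^{n-1} dx_{2j-1}\wedge dx_{2j}$. The analytic family of transverse slices parametrised by $Z_i^\circ$ is then an equisingular family of rational double points over a connected base; since the analytic type of a Du Val singularity is a discrete invariant, after shrinking this family is analytically isotrivial, whence
\[
\wh\sO_{X,p}\;\cong\; \C[[x_1,\ldots,x_{2n-2}]]\wh\otimes \wh\sO_{S,p}.
\]
Artin approximation then upgrades this into an analytic isomorphism of germs $(X,p)\cong (\C^{2n-2},0)\times (S,p)$, and a Darboux--Weinstein-type argument on the smooth locus of the product allows one to arrange that $\omega$ is identified with the standard product symplectic form. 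The main obstacle is precisely the isotriviality of the family of transverse slices and the resulting formal-to-analytic product decomposition; the symplectic structure provides the necessary rigidity of equisingular deformations of ADE singularities to make this step go through.
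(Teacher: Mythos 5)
Your choice of $U$ and the reduction to a transverse slice are fine, and they match the geometry behind the paper's argument; but the paper's proof is essentially a citation: the formal product decomposition at points of $U$ is exactly \cite[Thm.~2.3]{Kal}, the passage from formal to analytic is Artin approximation \cite[Corollary 2.6]{Ar}, and the compatibility with the symplectic form is \cite[Lemma 1.3]{Na10}. You instead try to reprove Kaledin's theorem, and the two places where you wave your hands are precisely the two nontrivial contents of the proposition. First, the product decomposition: knowing that the transverse slices form a family of rational double points of constant analytic type only tells you that the \emph{fibres} are pairwise isomorphic; it does not give you that the total space of the family is analytically (or even formally) a product over $Z_i^\circ$. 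To get that you need either a genuine local triviality statement (e.g.\ that the classifying map to the miniversal deformation of the ADE germ is constant, plus an argument producing a trivialization of the family, not just of its fibres), or, as Kaledin does, a formal localization argument using the Poisson structure. Your sentence ``the symplectic structure provides the necessary rigidity \dots to make this step go through'' names the difficulty without resolving it. Second, the symplectic compatibility: there is no off-the-shelf holomorphic Darboux--Weinstein theorem for a product with a singular factor; arranging that the isomorphism carries $\omega$ to the product form $\sum dx_{2j-1}\wedge dx_{2j} + \omega_S$ is exactly Namikawa's \cite[Lemma 1.3]{Na10} and requires an actual Moser-type or Poisson-theoretic argument on the singular germ.

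Two smaller points. Your verification that the slice $(S,p)$ is a symplectic (hence Du Val) surface singularity via ``restriction of the extended form to the strict transform in a log resolution of $X$'' is shaky: the strict transform of $S$ need not dominate a resolution of $S$ nicely, and the restriction of a $2$-form to a surface inside a $2n$-fold can degenerate; the standard clean route is that a transverse slice of a rational Gorenstein singularity with trivial canonical class is again rational Gorenstein with trivial canonical class, hence canonical of dimension $2$, hence a rational double point. Finally, the formal-to-analytic step you attribute to ``Artin approximation'' is correct in spirit but should be quoted precisely (\cite[Corollary 2.6]{Ar} is about algebraizing/analytifying formal isomorphisms of germs), since a bare isomorphism of completed local rings does not by itself produce an isomorphism of analytic germs without that theorem.
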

\begin{proof}
Let $\Sigma \subset X$ be the singular locus of the reduction of $X^\sing$. Kaledin's result \cite[Thm. 2.3]{Kal} implies that $\Sigma$ has codimension $\geq 4$ and that every point of $U:=X\ohne \Sigma$ admits the sought for product decomposition in the formal category. By \cite[Corollary 2.6]{Ar} the decomposition exists analytically. The last statement is \cite[Lemma 1.3]{Na10}.
\end{proof}

\begin{proposition}\label{proposition namikawa}
Let $X$ be a compact symplectic variety, let $\pi:\tilde X\to X$ be a crepant resolution by a compact K\"ahler manifold $\tilde X$ and let $U\subset X$ be as in Proposition \ref{proposition kaledin}. Then the restriction $H^1(X,T_X) \to H^1(U,T_U)$ is an isomorphism and $h^1(T_X)=h^1(T_{\tilde X})-m$ where $m$ is the number of irreducible components of the exceptional divisor of $\pi$.
\end{proposition}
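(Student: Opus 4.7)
The plan is to reduce both assertions to the open subset $U$, using that $X\ohne U$ has codimension at least four, and then to compute the remaining difference between $h^1(T_X)$ and $h^1(T_{\tilde X})$ via the Leray spectral sequence of $\pi$, exploiting the explicit local product structure of Proposition \ref{proposition kaledin}.

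For the first assertion, the symplectic form induces an isomorphism $T_{X_{reg}}\cong \Omega^1_{X_{reg}}$, which extends by reflexivity to $T_X\cong (\Omega^1_X)^{**}$. Setting $Z:=X\ohne U$, the desired isomorphism $H^1(X,T_X)\cong H^1(U,T_U)$ will follow from the vanishing of the local cohomology groups $H^i_Z(X,T_X)$ for $i=1,2$. On $U$ itself, the analytic product model $\C^{2n-2}\times S$ with $S$ an ADE surface singularity makes $T_U$ Cohen--Macaulay. To upgrade this to depth bounds along $Z$, I would invoke the identification $\pi_*T_{\tilde X}\cong T_X$ --- which follows from the extension theorem for reflexive differentials on klt (in fact symplectic) singularities combined with the symplectic identifications --- together with a Grauert--Riemenschneider type vanishing applied to the higher direct images of $T_{\tilde X}$. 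Since $\codim_X Z\geq 4$, this is enough to conclude.

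For the second assertion, set $\tilde U:=\pi^{-1}(U)$. The analogous codimension argument on the smooth variety $\tilde X$ gives $H^1(\tilde X,T_{\tilde X})\cong H^1(\tilde U,T_{\tilde U})$, so it remains to establish $h^1(T_{\tilde U})-h^1(T_U)=m$. From the Leray spectral sequence for $\tilde\pi:=\pi|_{\tilde U}$ and the isomorphism $\tilde\pi_*T_{\tilde U}\cong T_U$ one obtains the five-term exact sequence
\[
0\to H^1(U,T_U)\to H^1(\tilde U,T_{\tilde U})\to H^0(U,R^1\tilde\pi_*T_{\tilde U})\to H^2(U,T_U).
\]
The local product decomposition $\C^{2n-2}\times \tilde S\to \C^{2n-2}\times S$ together with the K\"unneth formula reduces the computation of $R^1\tilde\pi_*T_{\tilde U}$ to the surface case: it becomes a skyscraper sheaf whose total length equals the number of exceptional $(-2)$-curves in the ADE resolutions, i.e.\ the number $m$ of irreducible components of the exceptional divisor of $\pi$.

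The main obstacle is then to prove that the connecting map $H^0(U,R^1\tilde\pi_*T_{\tilde U})\to H^2(U,T_U)$ vanishes, which is what ultimately produces the equality $h^1(T_{\tilde U})=h^1(T_U)+m$. To handle this I would follow a Hodge-theoretic argument in the spirit of Namikawa's work \cite{Na06,Na10}: via the symplectic forms $T\cong \Omega^1$ on both $X$ and $\tilde X$, the local sections of $R^1\tilde\pi_*T_{\tilde U}$ correspond to first Chern classes of the exceptional components $E_1,\dots,E_m$, and these lift to globally defined, linearly independent classes in $H^1(\tilde X,\Omega^1_{\tilde X})\cong H^1(\tilde X,T_{\tilde X})$, forcing the edge map $H^1(\tilde U,T_{\tilde U})\to H^0(U,R^1\tilde\pi_*T_{\tilde U})$ to be surjective. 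Combining this with the two restriction isomorphisms concludes $h^1(T_X)=h^1(T_{\tilde X})-m$.
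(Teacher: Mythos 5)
Your overall architecture --- reducing to $U$, comparing the Leray five-term sequences for $\pi$ over $X$ and over $U$, using $\pi_*T_{\tilde X}=T_X$, identifying $h^0(R^1\tilde\pi_*T_{\tilde U})=m$, and forcing surjectivity of the edge map $H^1(T_{\tilde U})\to H^0(R^1\tilde\pi_*T_{\tilde U})$ via the Chern classes of the exceptional components --- is essentially the one the paper uses (with several of these steps outsourced to the proof of \cite[Theorem 2.2]{Na01}). The second half of your argument is therefore sound in spirit, up to the minor imprecision that $R^1\tilde\pi_*T_{\tilde U}$ is not a skyscraper but a sheaf supported on the positive-dimensional codimension-two singular stratum; what equals $m$ is $h^0$ of it, not a length.

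The genuine gap is in the first assertion. Surjectivity of $H^1(X,T_X)\to H^1(U,T_U)$ requires $H^2_Z(X,T_X)=0$, i.e.\ $\operatorname{depth}_Z T_X\geq 3$ along $Z=X\ohne U$. Reflexivity gives only depth $\geq 2$ (hence injectivity), and Cohen--Macaulayness of $T_U$ on $U$ says nothing about points of $Z$. Your proposed fix --- ``a Grauert--Riemenschneider type vanishing applied to the higher direct images of $T_{\tilde X}$'' --- is not available: GR vanishing concerns $\omega_{\tilde X}$, and here $R^1\pi_*T_{\tilde X}$ is visibly nonzero (your own computation of $R^1\tilde\pi_*T_{\tilde U}$ shows as much), so no such vanishing can hold and no depth bound follows. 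This is exactly the difficulty the paper's proof is built to circumvent: it gets injectivity of $H^1(T_X)\to H^1(T_U)$ from Namikawa's isomorphism $\Ext^1(\Omega_X,\sO_X)\cong\Ext^1(\Omega_U,\sO_U)$ \cite[Prop.~2.1]{Na01}, and then deduces surjectivity \emph{indirectly} from the inequality $h^1(T_X)\geq h^1(T_U)$, which comes out of the comparison of the two Leray sequences once one knows that $R^1\pi_*T_{\tilde X}\to R^1\pi_*T_{\tilde U}$ is injective --- and that last point \emph{is} a pure codimension argument, since $\tilde Z$ has codimension $\geq 2$ and $T_{\tilde X}$ is locally free. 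A secondary issue of the same nature: $\tilde Z=\pi^{-1}(Z)$ has codimension only $\geq 2$ in $\tilde X$ (semismallness gives no more over a codimension-four stratum), so your ``analogous codimension argument'' yields injectivity but not surjectivity of $H^1(T_{\tilde X})\to H^1(T_{\tilde U})$; the paper again appeals to \cite[Prop.~2.1]{Na01} here. Until these two restriction maps are controlled by something other than codimension counting, your chain $h^1(T_X)=h^1(T_U)=h^1(T_{\tilde U})-m=h^1(T_{\tilde X})-m$ does not close.
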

\begin{proof}
Let us consider the diagram
\[
\xymatrix{
0 \ar[r] & H^1(T_{X}) \ar[r]\ar[d] & Ext^1(\Omega_X,\sO_X) \ar[r]\ar[d]^{\phi} & H^0(T_X^1) \ar[d]\\
0 \ar[r] & H^1(T_U) \ar[r] & Ext^1(\Omega_U,\sO_U) \ar[r] & H^0(T_U^1) \ar[r] & 0\\
}
\]
with exact lines, where exactness at $H^0(T_U^1)$ is shown in (ii) of the proof of \cite[Theorem 2.2]{Na01}. Moreover, $\phi$ is an isomorphism by \cite[Proposition 2.1]{Na01}. In order to show that $H^1(T_X) \to H^1(T_U)$ is an isomorphism it hence suffices to show that 
\begin{equation}\label{eq h1tx}
h^1(T_X) \geq h^1(T_U). 
\end{equation}
%It is furthermore shown in the proof of \cite[Theorem 2.2]{Na01} that $h^1(T_U) = h^{1}(T_{\tilde X})-m$.
 Let us consider the following diagram. Here $\tilde U \subset \tilde X$ denotes the preimage of $U$ under $\pi$. 

\begin{equation}\label{eq diag restr}
\xymatrix{
0 \ar[r] & H^1(\pi_* T_{\tilde X}) \ar[r]\ar[d] & H^1(T_{\tilde X}) \ar[r]\ar[d]^{\phi'} & H^0(R^1\pi_*T_{\tilde X})\ar[d]^\psi\\
0 \ar[r] & H^1(\pi_* T_{\tilde U}) \ar[r] & H^1(T_{\tilde U}) \ar[r] & H^0(R^1\pi_*T_{\tilde U})\ar[r] & 0\\
}
\end{equation}
As $T_X$ is reflexive, we have $j_*T_{U} = T_X$ where $j:U\to X$ denotes the inclusion and hence $\pi_* T_{\tilde X}=T_X$. So $H^1(\pi_* T_{\tilde X})$ has dimension $h^1(T_X)$. The same argument shows that $H^1(\pi_* T_{\tilde U})$ has dimension $h^1(T_U)$. Again by \cite[Proposition 2.1]{Na01}, the morphism $\phi'$ is an isomorphism. 
Moreover, exactness at $H^0(R^1\pi_*T_{\tilde U})$ and the equality $h^1(T_U)=h^1(T_{\tilde U})-m$ have been shown in (ii) of the proof of \cite[Theorem 2.2]{Na01}.
So the desired inequality \eqref{eq h1tx} follows if in diagram \eqref{eq diag restr} the morphism $\psi$ is injective. But already the morphism $R^1\pi_*T_{\tilde X} \to R^1\pi_*T_{\tilde U}$ is injective which follows from $\scrH^1_{\tilde Z}(T_{\tilde X})=0$ where $\tilde Z = \tilde X \ohne \tilde U$ because $\tilde Z$ has codimension $\geq 2$ and $T_{\tilde X}$ is locally free. This concludes the proof.
\end{proof}

In the situation of Theorem \ref{thm huybrechts}, let $\pi: \tX\to X$ be a crepant resolution and let $D=\sum_{i=1}^mD_i$ be the exceptional divisor with its decomposition into irreducible components $D_i$. Denote by $L_i:=\sO_\tX(D_i)$.
We denote by $\tilde\scrX\to \Def(\tX)$ the universal deformation of $\tX$. This is the germ of a smooth space of dimension $h^{1,1}(\tX)$ by the Bogomolov-Tian-Todorov theorem. We consider the following subspaces of $\Def(\tX)$:

\begin{itemize}
\item $\Def(\tX,\ul L) \subset \Def(\tX)$ is the base of the universal deformation of $(\tX,L_1,\ldots,L_m)$, see \cite[(1.14)]{Huy99}. As the $D_i$ define linearly independent classes in $H^2(\tX,\C)$, $\Def(\tX,\ul L)$ is smooth and of codimension $m$ in $\Def(\tX)$ by loc. cit.
\item $\Def(\tX,\ul D) \subset \Def(\tX)$ is the image of the components containing all $D_i$ of the relative Douady space $\scrD(\tilde\scrX/\Def(\tX))\to \Def(\tX)$. This is the space where all components $D_i$ deform along with $\tX$.
\end{itemize}

We clearly have $\Def(\tX,\ul D) \subset \Def(\tX,\ul L)$
Consequently, $\dim \Def(\tX,\ul D)\leq h^{1,1}(\tX)-m$.

The key step will be to prove the smoothness of the space of locally trivial deformations of the singular variety $X$. 

\begin{proposition}\label{prop defo}
Let $\pi: \tX\to X$ be as above. Let $\tilde\scrX \to \Def(\tX,\ul D)$ and $\scrX \to\Def^\lt(X)$ be the universal deformations. Then there is a diagram 
\begin{equation}\label{eq diag lt}
\xymatrix{
\tilde\scrX \ar[d]\ar[r]& \scrX \ar[d]\\
\Def(\tX,\ul D) \ar[r]^{\pi_*} & \Def^\lt(X) \\
}
\end{equation}
with the following properties:
\begin{enumerate}
\item\label{item eins} $\Def^\lt(X)$ is smooth of dimension $h^{1,1}(\tX)-m$.
\item\label{item zwei} $\pi_*$ is the restriction of the natural finite morphism $\Pi:\Def(\tX)\to\Def(X)$. 
\item\label{item drei} $\dim \Def(\tX,\ul D) = h^{1,1}(\tX)-m$, in particular $\Def(\tX,\ul D)=\Def(\tX,\ul L)$.
\end{enumerate}
\end{proposition}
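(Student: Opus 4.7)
My plan is to prove items (1), (2), (3) in sequence. The smoothness of $\Def^\lt(X)$ will come from Ran's $T^1$-lifting principle, the dimension count from Proposition \ref{proposition namikawa} together with the identification $T_{\tX}\iso \Omega^1_{\tX}$ induced by the symplectic form on $\tX$, and the morphism $\pi_*$ will be obtained as the restriction of Namikawa's natural finite morphism $\Pi:\Def(\tX)\to\Def(X)$ from \cite{Na06}. The final dimension comparison will then follow by squeezing $\Def(\tX,\ul D)$ between the surjective image of $\pi_*$ and the ambient $\Def(\tX,\ul L)$.

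For (1), I would apply the $T^1$-lifting criterion to the functor $\Def^\lt_X$, whose tangent space is $H^1(T_X)$. To verify the freeness of $H^1(T_{\scrX/S})$ along small locally trivial deformations $\scrX/S$, I would restrict to the open subset $\scrU\subset\scrX$ lying over the open set $U\subset X$ of Proposition \ref{proposition kaledin}. By Proposition \ref{proposition namikawa}, the restriction $H^1(T_X)\to H^1(U,T_U)$ is an isomorphism on the central fibre, and the local product decomposition of Proposition \ref{proposition kaledin}, which is preserved in any locally trivial family, guarantees the analogous statement for $\scrU/S$, yielding the required freeness. Smoothness of $\Def^\lt(X)$ then yields $\dim\Def^\lt(X) = h^1(T_X) = h^1(T_{\tX}) - m = h^{1,1}(\tX)-m$, where the last equality uses that the symplectic form identifies $T_{\tX}\iso\Omega^1_{\tX}$.

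For (2), I would invoke Namikawa's construction: contracting the relative exceptional divisor in the universal family $\tilde\scrX\to\Def(\tX)$ produces a flat family of deformations of $X$, and hence a natural morphism $\Pi:\Def(\tX)\to\Def(X)$ which is known to be finite. To check that $\Pi$ restricts to a morphism $\Def(\tX,\ul D)\to\Def^\lt(X)$, I would argue that whenever all irreducible components $D_i$ deform with $\tX$, the \'etale-local ADE type of the transversal slice provided by Proposition \ref{proposition kaledin} remains constant in the contracted family, so the resulting deformation of $X$ is locally trivial. This produces $\pi_*$ and the diagram \eqref{eq diag lt}.

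For (3), I would obtain the dimension of $\Def(\tX,\ul D)$ by a squeeze: the inclusion $\Def(\tX,\ul D)\subset\Def(\tX,\ul L)$ gives $\dim\Def(\tX,\ul D)\leq h^{1,1}(\tX)-m$, while the surjectivity of $\pi_*$ combined with its finiteness (inherited from $\Pi$) and item (1) yields the reverse inequality $\dim\Def(\tX,\ul D)\geq h^{1,1}(\tX)-m$. Equality then forces $\Def(\tX,\ul D)=\Def(\tX,\ul L)$. The principal obstacle is precisely this surjectivity of $\pi_*$: given a locally trivial deformation $\scrX/S$ of $X$, one must lift it to a deformation of $\tX$ in which each $D_i$ deforms separately. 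I expect to handle this by constructing a simultaneous crepant resolution, using that in a locally trivial family the dual graph of the transversal ADE singularity is constant along $S$, so each exceptional component can be tracked individually; alternatively, one can combine the surjectivity of $\pi_*$ on tangent spaces (a formal consequence of Proposition \ref{proposition namikawa}) with the smoothness established in (1) to deduce surjectivity on germs.
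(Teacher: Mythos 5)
Your overall skeleton (Ran's $T^1$-lifting for smoothness, Namikawa's finite map $\Pi$ for the diagram, and a dimension squeeze for item (3)) matches the paper's, but the proposal has a genuine gap at the heart of item (1). The $T^1$-lifting criterion requires that $H^1(T_{\scrX/S})$ be a \emph{free} $\sO_S$-module for every small locally trivial deformation $\scrX\to S$, and your justification --- that the local product decomposition of Proposition \ref{proposition kaledin} ``is preserved in any locally trivial family'' --- does not deliver this: local triviality of the family is the hypothesis, not the conclusion, and it says nothing about the global cohomology module $H^1(T_{\scrX/S})$ being free. The paper's actual mechanism is Hodge-theoretic and uses the symplectic form on $X$ itself (not just on $\tX$): on $U$ one identifies $T_U\cong j_*\Omega^1_{X^\reg}$, fits $H^1(j_*\Omega^1_{X^\reg})=\HH^2(U,j_*\Omega_{X^\reg}^{\geq 1})$ into the exact sequence $0\to H^1(j_*\Omega^1_{X^\reg})\to H^2(U,\C)\to H^2(\sO_U)\to 0$ coming from \eqref{eq komplexe} via Grothendieck's theorem for V-manifolds and the vanishings $H^1(\sO_U)=0$, $H^2(\sO_U)\cong\C$, and then observes that $H^2(U,\C)$ is a topological invariant, hence constant in a locally trivial family. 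Without some such argument the deformation invariance of $h^1(T_X)$ is unproved, and (1) does not follow.

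There is a second, structural issue in how you organize (2) and (3). You prove the ``forward'' containment $\Pi(\Def(\tX,\ul D))\subset\Def^\lt(X)$ directly (by asserting that contracting a family in which all $D_i$ deform yields a locally trivial family --- itself an unjustified step requiring simultaneous-resolution theory), and then separately need surjectivity of $\pi_*$ for (3). The paper does both at once with a single ``backward'' inclusion: for $t\in\Def^\lt(X)$ and $s\in\Pi^{-1}(t)$ one uses the unique relative minimal resolution $\wt\scrU\to\scrU$ of the transversal ADE locus, embeds it into $\tilde\scrX_s$ by uniqueness, and tracks the closures of the exceptional components to conclude $\Pi^{-1}(\Def^\lt(X))\subset\Def(\tX,\ul D)$; since $\Pi$ is finite and $\dim\Def(\tX,\ul D)\leq h^{1,1}(\tX)-m=\dim\Def^\lt(X)$, this inclusion is an equality, which yields (2) and (3) simultaneously and makes the forward direction unnecessary. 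Your primary idea for surjectivity (tracking exceptional components via the constant dual graph) is essentially this argument, but your proposed fallback --- deducing surjectivity of the germ map from surjectivity on tangent spaces plus smoothness of the target --- is invalid: a closed immersion of a fat point into a smooth one-dimensional germ is surjective on tangent spaces but not on germs. Smoothness of the \emph{source} would be needed, and that is not available for $\Def(\tX,\ul D)$ a priori.
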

\begin{proof}
We will first show that $\Def^\lt(X)$ is smooth.
Let $U\subset X$ be as in Proposition \ref{proposition kaledin}.
The restriction $H^1(T_X) \to H^1(T_U)$ is an isomorphism by Proposition \ref{proposition namikawa}, in other words, deformations and their local triviality are determined on $U$. Let $j:X^\reg \to U$ denote the inclusion. As $T_U$ is reflexive, we have that $T_U\isom j_* T_{X^\reg}$. 
Hence $H^1(T_X)=H^1(U,j_*\Omega_{X^\reg})=\HH^2(U,j_*\Omega_{X^\reg}^{\geq 1})$ which is deformation invariant. To see this last claim we consider the exact sequence of complexes
\begin{equation}\label{eq komplexe}
0\to j_*\Omega_{X^\reg}^{\geq 1} \to j_*\Omega_{X^\reg}^{\bullet} \to \sO_U \to 0.
\end{equation}
By Grothendieck's theorem for V-manifolds $\HH^k(j_*\Omega_{X^\reg}^{\bullet}) = H^k(U,\C)$, see for example the footnote in the proof of \cite[Proposition (1.11)]{Na06}. Moreover, we have
\[
H^1(\sO_U) = H^1(\sO_X) = H^1(\sO_{\tilde X}) = 0,
\]
where the first equality holds because $X$ is Cohen-Macaulay and $\codim_X (X\ohne U) \geq 4$ and the second because $X$ has rational singularities. In the same way, one finds
\[
H^2(\sO_U) = H^2(\sO_X) = H^2(\sO_{\tilde X}) \isom \C.
\]

so that \eqref{eq komplexe} gives an exact sequence
\[
0\to H^1(j_*\Omega_{X^\reg}) \to H^2(U,\C) \to H^2(\sO_U) \to 0,
\]
where the last map is surjective because the composition $H^2(\tilde X,\C) \to H^2(\sO_{\tilde X}) \to[\isom] H^2(\sO_U) $ is.
The same line of arguments works identically in a relative situation and shows that $H^1(T_{\sX/S})=H^1(j_*\Omega_{(\sX/S)^\reg})$ is a free $\sO_S$-module for any small deformation $\sX\to S$ over a local artinian scheme $S$. In other words, the tangent space to the deformation functor $H^1(T_X)$ is deformation invariant, hence by the $T^1$-lifting argument $\Def^\lt(X)$ is smooth. From Proposition \ref{proposition namikawa} it follows that $\dim \Def^\lt(X)=h^{1,1}(\tX)-m$.

%$H^2(\tX,\C) = \pi^*H^2(X,\C)\oplus \bigoplus_{i=1}^m \C[D_i]$ we conclude that $\dim \Def^\lt(X)=h^{1,1}(\tX)-m$.

As explained in \cite[\S 3]{Na06} there is a diagram as \eqref{eq diag lt} for arbitrary instead of locally trivial deformations.  In particular, there is a finite map $\Pi:\Def(\tX)\to\Def(X)$ and for each $t\in \Def^\lt(X)$ and every $s\in \Pi^{-1}(t)$ the morphism $\tilde\scrX_s\to \scrX_{t}$  is a crepant resolution.
Denote by  $\scrU\subset \scrX$ be the induced locally trivial deformation of $U\subset X$. Note that by the choice of $\scrU$ it is a locally trivial deformation of an ADE-surface singularity, thus it has a unique minimal relative resolution $\wt\scrU\to \scrU$. By uniqueness, $\wt\scrU_t$ embeds into $\tilde\scrX_s$ for each $t\in \Def^\lt(X)$ and $s\in \Pi^{-1}(t)$. Taking the closure of the irreducible components of the exceptional divisor of this morphism in $\wt\scrX$ we obtain deformations of the $D_i$ over $\Pi^{-1}(\Def^\lt(X))$ and thus the inclusion $\Pi^{-1}(\Def^\lt(X)) \subset \Def(\tX,\ul D)$ holds. As $\dim\Def(\tX,\ul D)\leq h^{1,1}(\tX)-m$, this inclusion is an equality and the restriction of $\Pi$ to $\Def(\tX,\ul D)$ gives the desired morphism. Also \eqref{item drei} follows from this.
\end{proof}

\begin{proposition}\label{prop defo birational}
Let $X\ratl X'$ be as in Theorem \ref{thm huybrechts} and let $\scrX \to \Def^\lt(X)$ and $\scrX' \to \Def^\lt(X')$ be the universal locally trivial deformations of $X$ respectively $X'$. Then there is a correspondence $\Gamma \subset \Def^\lt(X)\times \Def^\lt(X')$ surjecting onto each factor such that for each $(t,t') \in \Gamma$ we have a birational map $\phi_{(t,t')}:\scrX_t \ratl \scrX'_{t'}$. For general $(t,t')\in \Gamma$, the map $\phi_{(t,t')}$ is an isomorphism.
\end{proposition}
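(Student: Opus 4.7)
The plan is to reduce to Huybrechts' theorem \cite[Thm.~2.5]{Huy} applied to the crepant resolutions and to descend the resulting correspondence along the finite morphisms of Proposition \ref{prop defo}. Let $\pi: \tilde X \to X$ and $\pi': \tilde X' \to X'$ be the crepant resolutions, with exceptional divisors $D = \sum_i D_i$ and $D' = \sum_j D'_j$. The map $\phi$ lifts to a birational map $\tilde\phi: \tilde X \ratl \tilde X'$ which, being a birational map between smooth projective varieties with trivial canonical class, is automatically an isomorphism in codimension one. Combined with the fact that $\phi$ itself is an isomorphism in codimension one, this forces the strict transforms to induce a bijection between the $D_i$ and the $D'_j$: if $\tilde\phi_* D_i$ were not $\pi'$-exceptional, then via the codim-$1$-iso $\phi^{-1}$ the divisor $D_i$ would dominate a divisor in $X$, contradicting its $\pi$-exceptionality. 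After reindexing, $m = m'$ and $\tilde\phi_* D_i = D'_i$.

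Applying Huybrechts' theorem to $\tilde\phi$ yields a correspondence $\tilde\Gamma \subset \Def(\tilde X) \times \Def(\tilde X')$ that surjects onto each factor, such that for every $(s, s') \in \tilde\Gamma$ there is a birational map $\tilde\phi_{(s,s')}: \tilde\scrX_s \ratl \tilde\scrX'_{s'}$ deforming $\tilde\phi$, and such that $\tilde\phi_{(s,s')}$ is an isomorphism for general $(s,s')$. I would then restrict to divisor-preserving deformations by setting
\[
\tilde\Gamma_{\ul D} := \tilde\Gamma \cap \bigl(\Def(\tilde X, \ul D) \times \Def(\tilde X', \ul D')\bigr).
\]
Since $\tilde\phi_* D_i = D'_i$ and since the isomorphism-in-codimension-one property of $\tilde\phi_{(s,s')}$ is open in $(s, s')$, a point in $\Def(\tilde X, \ul D)$, that is, a small deformation of $\tilde X$ along which all the $D_i$ remain, transports via $\tilde\Gamma$ to a deformation of $\tilde X'$ in which the $D'_i$ also deform. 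Hence both projections $\tilde\Gamma_{\ul D} \to \Def(\tilde X, \ul D)$ and $\tilde\Gamma_{\ul D} \to \Def(\tilde X', \ul D')$ are surjective.

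Using the finite morphisms $\Pi$ and $\Pi'$ from Proposition \ref{prop defo} (\ref{item zwei}), I define $\Gamma := (\Pi \times \Pi')(\tilde\Gamma_{\ul D}) \subset \Def^\lt(X) \times \Def^\lt(X')$, which again surjects onto both factors. For $(t, t') \in \Gamma$ lifting to some $(s, s') \in \tilde\Gamma_{\ul D}$, the map $\tilde\phi_{(s,s')}$ is an isomorphism in codimension one that matches the deformed $\sum D_i$ with $\sum D'_i$; composing with the crepant contractions $\tilde\scrX_s \to \scrX_t$ and $\tilde\scrX'_{s'} \to \scrX'_{t'}$ yields the desired birational map $\phi_{(t,t')}: \scrX_t \ratl \scrX'_{t'}$. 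For general $(s,s')$ the map $\tilde\phi_{(s,s')}$ is an isomorphism identifying the two configurations of exceptional divisors, hence descends to an isomorphism $\phi_{(t,t')}$. The main technical point, I expect, is the surjectivity of $\tilde\Gamma_{\ul D} \to \Def(\tilde X, \ul D)$: this rests both on the divisor matching $\tilde\phi_* D_i = D'_i$ and on the fact that Huybrechts' explicit construction of $\tilde\Gamma$ is compatible with this matching at the level of cohomology classes, so that the divisor-preserving locus on one side is identified with the divisor-preserving locus on the other.
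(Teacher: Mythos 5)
Your overall route is the same as the paper's: lift $\phi$ to a birational map $\tilde\phi\colon\tX\ratl\tX'$ between the crepant resolutions, note that $\tilde\phi$ is an isomorphism in codimension one and matches the exceptional divisors $D_i$ with the $D_i'$, identify the (divisor-preserving) deformation spaces of the resolutions, and descend via the finite maps of Proposition \ref{prop defo}. The paper obtains the identification $\Def(\tX,\ul D)\cong\Def(\tX',\ul D)$ directly from the local Torelli theorem; your version --- take Huybrechts' correspondence $\tilde\Gamma$ and intersect with $\Def(\tX,\ul D)\times\Def(\tX',\ul D)$ --- amounts to the same thing, since $\tilde\Gamma$ is the graph of the Torelli identification, $\tilde\phi^*[D_i']=[D_i]$, and $\Def(\tX,\ul D)=\Def(\tX,\ul L)$ by Proposition \ref{prop defo}\eqref{item drei}. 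Up to and including the construction of $\Gamma$ and of the birational maps $\phi_{(t,t')}$ your argument is fine.

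The gap is in the final claim. You deduce that $\phi_{(t,t')}$ is an isomorphism for general $(t,t')$ from the assertion that $\tilde\phi_{(s,s')}$ is an isomorphism for general $(s,s')\in\tilde\Gamma_{\ul D}$. But the genericity in Huybrechts' theorem is genericity in $\Def(\tX)$: his proof moves to nearby fibers whose Picard group is trivial, where there is no room for a nontrivial birational modification. The locus $\tilde\Gamma_{\ul D}$ is a proper closed subset of $\tilde\Gamma$ of codimension $m$, namely exactly the locus where the classes $[D_i]$ stay algebraic; its general point is \emph{not} general in $\tilde\Gamma$, the fibers over it always have Picard rank at least $m$, and nothing in your argument excludes that $\tilde\scrX_s\ratl\tilde\scrX'_{s'}$ remains a genuine flop (two distinct crepant resolutions of the same singular fiber) for \emph{every} $s\in\Def(\tX,\ul D)$. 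So this step does not follow from the theorem you invoke. The paper argues instead on the singular fibers downstairs: for a general projective $t\in\Def^\lt(X)$ the variety $\scrX_t$ has Picard number one (the $m$ exceptional classes have been contracted away), and a birational map between projective $K$-trivial varieties of Picard number one which is an isomorphism in codimension one carries an ample class to a big, movable, hence ample class and is therefore an isomorphism. Some such argument on $\scrX_t$ itself, rather than a genericity appeal on the resolutions, is needed to close your proof.
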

\begin{proof}
Let $\tX\to X$, $\tX'\to X'$ be crepant resolutions of singularities and consider the morphisms $\Def(\tX,\ul D) \to[\pi_*] \Def^\lt(X)$ and $\Def(\tX',\ul D) \to[\pi'_*] \Def^\lt(X')$ from Proposition \ref{prop defo}. As $\tX \ratl \tX'$ is an isomorphism in codimension $1$, the local Torelli theorem gives an isomorphism $\vphi:\Def(\tX,\ul D)\to \Def(\tX',\ul D)$ and under this identification the fibers $\tilde\scrX_t$ and $\tilde\scrX'_t$ will be birational. As the morphisms $\pi_t: \tilde\scrX_t \to \scrX_t$ and $\pi_t': \tilde\scrX'_t \to \scrX'_t$ contract the same divisors, we obtain a birational map $\scrX_{\pi_*(t)}\to \scrX'_{\pi'_*(t)}$ which is isomorphic in codimension one. We put $\Gamma:=\left(\pi_*\times (\pi'_*\circ \vphi)\right)(\Def(\tX,\ul D))$. The last statement follows as the general projective deformation of $X$ has Picard number one. Note that we can always deform to projective varieties.
\end{proof}

This concludes the proof of Theorem \ref{thm huybrechts}.\qed

%------------------------------------------------------------------------------------------
\section{Termination}\label{sec term}
%------------------------------------------------------------------------------------------
In this section we prove our main result:
\begin{theorem}\label{thm main}
Let $X$ be a projective irreducible symplectic manifold and let $\Delta$ be an effective $\R$-Cartier divisor on $X$, such that the pair $(X,\Delta)$ is log-canonical. Then log-MMPs for $(X,\Delta)$ exist and every log-MMP terminates in a minimal model $(X',\Delta')$ where $X'$ is a symplectic with canonical singularities variety and $\Delta'$ is an effective, nef $\R$-Cartier divisor.
\end{theorem}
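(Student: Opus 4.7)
The plan is to follow Shokurov's strategy for termination of flips \cite{Sh03}: establish both the lower semi-continuity (LSC) and the ascending chain condition (ACC) for minimal log discrepancies along the sequence of varieties produced by any log-MMP for $(X,\Delta)$, and then invoke his theorem to rule out an infinite tail of flips. Existence of some log-MMP is a BCHM-style input: since $K_X\equiv 0$, the divisor $K_X+\Delta=\Delta$ is effective; after a mild perturbation of the coefficients of $\Delta$ one reduces to a klt setting where \cite{BCHM} applies, and the log-canonical case is recovered by a standard limit argument.

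Consider a log-MMP $X=X_0 \ratl X_1 \ratl X_2 \ratl \cdots$. Divisorial contractions strictly decrease the Picard rank, so they occur only finitely many times, and one may focus on the (potentially infinite) tail of flips. A crucial preliminary step is to verify that at each stage $X_i$ is a projective symplectic variety admitting a crepant resolution by an irreducible symplectic manifold. For divisorial contractions, the triviality of $K_{X_0}$ forces crepancy, so the symplectic form descends to the smooth locus of $X_{i+1}$ and extends across any resolution by Namikawa's work. For flips, a crepant resolution of $X_{i+1}$ by an irreducible symplectic manifold is obtained from one of $X_i$ via a suitable sequence of Mukai flops, which preserve the irreducible symplectic property. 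With this in hand, Corollary \ref{cor lsc} provides LSC for $\mld_{(X_i,\Delta_i)}$ at every step.

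For ACC we invoke the second key ingredient, Theorem \ref{thm huybrechts}: consecutive $X_i$'s in the flip phase are isomorphic in codimension one and carry crepant resolutions by irreducible symplectic manifolds, hence are locally trivial deformations of one another. Inductively, every $X_i$ is a locally trivial deformation of the first one, so the analytic-local isomorphism types of singularities realized along the sequence are all among those of a single fixed variety, and the boundary coefficients of $\Delta_i$ remain in the finite set determined by $\Delta$. Kawakita's ACC theorem \cite{Kaw} applied to this bounded pool of germs then yields the ACC along the sequence. Combining LSC and ACC via Shokurov, the MMP must terminate in some $(X',\Delta')$ with $\Delta'$ nef and effective by construction; $X'$ inherits symplectic singularities (hence canonical, by \cite{Be}) from the deformation invariance above.

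The main obstacle is the preservation-of-resolution statement: proving that the class of projective symplectic varieties admitting a crepant resolution by an irreducible symplectic manifold is stable under the MMP, especially under flips. Once this stability is secured, Theorem \ref{thm huybrechts} anchors the whole flip sequence to a single deformation class of singularities, which is precisely what makes Kawakita's ACC apply in a regime where it would otherwise be out of reach for arbitrary flip sequences.
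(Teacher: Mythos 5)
Your overall architecture is the same as the paper's: run the MMP via \cite{BCHM} (using $K_X\equiv 0$ to rescale $\Delta$ into the klt range), dispose of divisorial contractions by Picard-rank finiteness, get LSC from Corollary \ref{cor lsc} via crepant descent, get ACC by anchoring all the $X_i$ to one locally trivial deformation class through Theorem \ref{thm huybrechts} and then applying Kawakita's finiteness \cite{Kaw}, and conclude with Shokurov. That part is fine.

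The genuine gap is exactly at the step you yourself flag as ``the main obstacle'': the claim that a crepant resolution of $X_{i+1}$ by an irreducible symplectic manifold is obtained from one of $X_i$ ``via a suitable sequence of Mukai flops.'' This is not an argument: it is not known (and in general is false) that a birational map between holomorphic symplectic manifolds, or between crepant resolutions of flipped symplectic varieties, factors into Mukai flops, so you cannot transport the resolution this way. The paper proves the corresponding Lemma differently and unconditionally: by \cite[Corollary 1.4.3]{BCHM} the singular variety $X_i$ admits \emph{some} crepant $\Q$-factorial terminalization $\pi:\tX_i\to X_i$; composing with the flip base gives two crepant morphisms $\tX_{i-1}\to Z$ and $\tX_i\to Z$ from $\Q$-factorial terminal varieties, and Namikawa's theorem \cite[Corollary 1, p.~98]{Na06} then forces $\tX_i$ to be smooth because $\tX_{i-1}$ is. You should replace the Mukai-flop claim by this existence-plus-rigidity argument; without it, Theorem \ref{thm huybrechts} and Corollary \ref{cor lsc} cannot even be invoked at the later stages of the flip sequence, and the whole ACC/LSC machine has nothing to run on. (Two smaller points: no limit argument is needed for the lc case, since $K_{X_i}\equiv 0$ makes the $(K_{X_i}+\Delta_i)$- and $(K_{X_i}+\eps\Delta_i)$-MMPs literally step-by-step identical; and to pass from ``locally trivial deformations'' to ``same set of mlds'' you still need the remark that mlds at closed points are analytic-local invariants, which is how Kawakita's theorem on a \emph{single fixed} variety is made to cover the whole sequence.)
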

In fact, we could also drop the lc-assumption on $(X,\Delta)$, as thanks to $K_X=0$ we can rescale $\Delta$ at any time. The proof of the theorem will occupy the rest of the section.
Let $(X,\Delta)$ be a log pair on a projective irreducible symplectic manifold. By \cite[Corollary 1.4.1]{BCHM}, $(K_X+\Delta)$-flips exist and so we may run a $(K_X+\Delta)$-MMP. 
This produces a sequence 
\begin{equation}\label{eq mmp}
X=X_0 \rat{\phi_0} X_1 \rat{\phi_1} X_2 \ratl \ldots 
\end{equation}
where the $\phi_i$ are either divisorial contractions or flips. Let us denote $\Delta_0:=\Delta$ and $\Delta_i = (\phi_i)_* \Delta_{i-1}$. Note that at each step $K_{X_i}$ will be trivial and therefore we can rescale $\Delta_i$ such that $(X_i,\Delta_i)$ will be klt, hence the above result applies.  We want to show that \eqref{eq mmp} terminates after a finite number of steps. First we notice:

\begin{lemma}\label{lemma crepant resolution}
Each $X_i$ is a symplectic variety and admits a crepant resolution.
\end{lemma}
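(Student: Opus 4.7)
My plan is to proceed by induction on $i$. The base case $i=0$ is immediate since $X_0 = X$ is smooth and irreducible symplectic, hence its own crepant resolution. For the inductive step, assume $X_i$ is a symplectic variety admitting a crepant resolution $\pi_i: \tilde X_i \to X_i$ by an irreducible symplectic manifold. Because $X_i$ is symplectic, $K_{X_i} \equiv 0$, so the MMP step $\phi_i$ contracts a $\Delta_i$-negative ray; a direct computation then shows $K_{X_{i+1}} \equiv 0$ as well.

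In the divisorial case $\phi_i: X_i \to X_{i+1}$, the identity $K_{X_i}=\phi_i^*K_{X_{i+1}}+aE$ together with $K_{X_i}=0$ and the extremality of the ray spanned by $E$ forces $a=0$, using that $\phi_i^*K_{X_{i+1}}$ is $\phi_i$-numerically trivial. Thus $\phi_i$ is crepant, and the composition $\pi_{i+1}:=\phi_i\circ\pi_i:\tilde X_i\to X_{i+1}$ is a crepant resolution by an irreducible symplectic manifold. The symplectic form on $\tilde X_i$ descends to $X_{i+1}^{\mathrm{reg}}$ and extends to this resolution, so $X_{i+1}$ is a symplectic variety in the sense of Beauville.

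In the flip case $\phi_i: X_i \ratl X_{i+1}$, the map is an isomorphism in codimension one, so the symplectic form on $X_i^{\mathrm{reg}}$ transports to an open subset of $X_{i+1}^{\mathrm{reg}}$ with complement of codimension at least two and extends uniquely to all of $X_{i+1}^{\mathrm{reg}}$. To construct a crepant resolution I would consider the flipping contraction $f_i:X_i\to Z_i$ and set $g_i:=f_i\circ\pi_i:\tilde X_i\to Z_i$. Because $f_i$ is $K_{X_i}$-trivial, $g_i$ is a $K_{\tilde X_i}$-trivial birational contraction of finitely many curves in the smooth variety $\tilde X_i$. Using standard results on flops of smooth symplectic manifolds, the $\pi_i^*\Delta_i$-flop of $g_i$ should exist and produce a smooth projective variety $\tilde X_{i+1}$ birational to $\tilde X_i$, with a crepant contraction $\tilde X_{i+1}\to Z_i$ factoring through the flipped map $X_{i+1}\to Z_i$. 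Since $K_{\tilde X_{i+1}}=0$, the transported symplectic form remains non-degenerate on all of $\tilde X_{i+1}$, and by birational invariance of $\pi_1$ and of $h^{2,0}$ among smooth projective varieties, $\tilde X_{i+1}$ is irreducible symplectic. The induced morphism $\tilde X_{i+1}\to X_{i+1}$ is the desired crepant resolution, and this confirms $X_{i+1}$ is a symplectic variety.

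The main obstacle is the flip case, specifically justifying the existence and smoothness of the lifted flop $\tilde X_i\ratl\tilde X_{i+1}$. As a fallback route I would instead resolve the indeterminacies of $\tilde X_i\ratl X_{i+1}$ to obtain a common smooth modification $W$ and run a relative MMP on $W$ over $X_{i+1}$: termination is ensured by Matsushita's theorem since all intermediate varieties remain smooth symplectic by Namikawa's smoothness results, and the relative minimal model is the sought-for crepant resolution of $X_{i+1}$ by an irreducible symplectic manifold.
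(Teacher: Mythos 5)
Your induction setup, the base case, and the extension of the symplectic form (the exceptional locus on $X_{i+1}$ has codimension at least $2$ in both the divisorial and the flip case) are all fine, and your treatment of the divisorial case is essentially correct and even more explicit than the paper's: triviality of $K$ on both sides forces the discrepancy of the contracted divisor to vanish, so $\tilde X_i \to X_i \to X_{i+1}$ is already a crepant resolution.

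The flip case, however, contains a genuine gap. Your main route rests on the claim that $g_i = f_i\circ\pi_i$ is ``a $K_{\tilde X_i}$-trivial birational contraction of finitely many curves''. This is false in general: the crepant resolution $\pi_i$ of a singular symplectic variety typically has divisorial exceptional locus (already $X_1$, if obtained from $X_0$ by a divisorial contraction, has $X_0$ as its crepant resolution with a nontrivial exceptional divisor), so $g_i$ contracts divisors and the existence theory for flops of small contractions does not apply. Your fallback is closer to a correct argument but mis-attributes the two key inputs: termination of a relative MMP for $W$ over $X_{i+1}$ cannot be deduced from Matsushita's theorem, which concerns sequences of flips of \emph{smooth symplectic} manifolds, whereas $W$ and the intermediate models are neither symplectic nor, a priori, smooth; and ``Namikawa's smoothness results'' do not apply to those intermediate steps. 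What actually closes the argument --- and what the paper does --- is: (a) invoke \cite[Corollary 1.4.3]{BCHM} to produce a crepant morphism $\pi:\tilde X_{i+1}\to X_{i+1}$ with $\tilde X_{i+1}$ $\Q$-factorial and terminal; (b) observe that $\tilde X_i\to X_i\to Z$ and $\tilde X_{i+1}\to X_{i+1}\to Z$ are then two crepant $\Q$-factorial terminal models over $Z$ with $\tilde X_i$ smooth, so that \cite[Corollary 1, p.~98]{Na06} forces $\tilde X_{i+1}$ to be smooth. Without (a) you have no candidate resolution, and without (b) you have no smoothness; neither is supplied by the flop construction or by the Matsushita termination argument as you state them.
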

\begin{proof}
By induction we may assume that $X_{i-1}$ is a symplectic variety and has a crepant resolution $\tilde\pi:\tX_{i-1}\to X_{i-1}$.  Symplecticity of $X_i$ is clear, as the exceptional locus of $X_{i-1} \ratl X_{i}$ on $X_{i}$ has codimension $\geq 2$ and thus the symplectic form from $X_{i-1}$ extends. By \cite[Corollary 1.4.3]{BCHM} there exists a proper birational morphism $\pi:\tX_i\to X_i$ such that $\tX_i$ has only $\Q$-factorial terminal singularities and $\pi$ is crepant. Let $X_{i-1}\to Z \leftarrow X_{i}$ be the flipping contraction. Then the compositions $\tX_i\to X_i\to Z$ and $\tX_{i-1}\to X_{i-1}\to Z$ are crepant morphisms and $\tX_{i-1}$ is smooth, hence by \cite[Corollary 1, p. 98]{Na06} also $\tX_{i}$ is smooth.
\end{proof}

In the course of the MMP, only a finite number of divisorial contractions can occur so that we can reduce to the following situation: $X=X_0$ is a symplectic variety having a crepant resolution, $\Delta=\Delta_0$ is an effective $\R$-divisor on $X$ and we are given a sequence
\begin{equation}\label{eq sequence of flips}
X=X_0 \rat{\phi_0} X_1 \rat{\phi_1} X_2 \ratl \ldots 
\end{equation}
where $\phi_i$ is a log-flip for the pair $(X_i,\Delta_i)$. We will show that such a sequence is finite by using Shokurov's criterion. We only need LSC for the pairs $(X_i,\Delta_i)$ and ACC for the set  
\[
\Omega(X):= \left\{ \mld(E_i,X_i,\Delta_i)\middle|
i\in \N\right\}
\]
where $E_i\subset X_i$ denotes the exceptional locus of $\phi_i:X_i\to X_{i+1}$, see also \cite[\S 3]{HM10}.
Recalling that LSC holds by Corollary \ref{cor lsc} and Lemma \ref{lemma crepant resolution} above, we are left with ACC. 
By a theorem of Kawakita \cite{Kaw}, the set of all mlds for a fixed finite set of coefficients on a fixed projective variety is finite. More precisely, let $X$ be a projective variety, let $\Gamma \subset [0,1]$ be a finite set and consider 
\[
M_\Gamma(X):=\{ \mld_{(X,\Delta)}(x) \mid \coeff(\Delta)\in \Gamma, (X,\Delta)\textrm{ lc at } x\in X^{(0)}\}
\]
Then by \cite[Thm. 1.2]{Kaw}, the set $M_\Gamma(X)$ is finite. By Theorem \ref{thm huybrechts} all $X_i$ in the sequence \eqref{eq sequence of flips} are locally trivial deformations of one another. Thus the ACC condition is satisfied by the following observation, which has already been made in \cite[Corollary 1.4]{Nak}.

\begin{proposition}
If $X$ and $X'$ are locally trivial deformations of one another, then $M_\Gamma(X)=M_\Gamma(X')$.
\end{proposition}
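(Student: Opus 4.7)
The plan rests on the fact that the minimal log discrepancy is a formal local invariant: $\mld_{(Y,D)}(y)$ depends only on the isomorphism class of the formal pair $(\Spec\hat\sO_{Y,y},\hat D_y)$, since divisors over $Y$ with center $y$ correspond to valuations on $\hat\sO_{Y,y}$ and discrepancies are computed from the formal data (see e.g.\ \cite[\S 1]{Am99} or \cite{EMY}). So to prove $M_\Gamma(X)=M_\Gamma(X')$ it suffices, for each pair $(X,\Delta)$ with $\coeff(\Delta)\in\Gamma$ lc at a closed point $x\in X$, to exhibit a pair $(X',\Delta')$ with $\coeff(\Delta')\in\Gamma$ lc at some closed point $x'\in X'$ whose formal germ matches that of $(X,\Delta)$ at $x$; the opposite inclusion will follow by symmetry.

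The hypothesis provides a locally trivial family $\pi\colon\sX\to S$ over a connected base realizing $X$ and $X'$ as distinct fibers. Local triviality means that every closed point $x\in X$ admits an \'etale (or analytic) neighborhood $U\subset\sX$ with a trivialization $U\isom V\times T$ over an open $T\subset S$, where $V\subset X$ is open. After shrinking so that the base point corresponding to $X'$ lies in $T$, one distinguishes a closed point $x'\in U\cap X'$ corresponding to $x$, and the restrictions of the trivialization to the fibers give canonical isomorphisms $\hat\sO_{X,x}\isom\hat\sO_{V,x}\isom\hat\sO_{X',x'}$.

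To transport $\Delta$, I would spread $\Delta\cap V$ to the relative divisor $(\Delta\cap V)\times T\subset U$ and let $\Delta'$ be the Zariski closure in $X'$ of the restriction of this relative divisor to $U\cap X'$. By construction $\Delta'$ is effective with the same coefficients as $\Delta\cap V$, so $\coeff(\Delta')\subset\Gamma$, and its formal germ at $x'$ is carried to $\hat\Delta_x$ under the identification above. Since $K_{\sX/T}+(\Delta\cap V)\times T$ is relatively $\R$-Cartier on $U$ (inheriting from $K_X+\Delta$), $K_{X'}+\Delta'$ is $\R$-Cartier at $x'$, and log-canonicity transfers formally as well. The local invariance of $\mld$ then yields $\mld_{(X',\Delta')}(x')=\mld_{(X,\Delta)}(x)=a$, proving $a\in M_\Gamma(X')$.

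The chief obstacle I anticipate is globalizing the construction: the Zariski closure $\Delta'$ need not \emph{a priori} make $K_{X'}+\Delta'$ globally $\R$-Cartier, so $(X',\Delta')$ could fail to be a log pair in the usual sense. Since only the germ at $x'$ affects $\mld_{(X',\Delta')}(x')$, one may freely modify $\Delta'$ away from $x'$; in the application at hand the varieties $X_i$ produced by the MMP are $\Q$-factorial, making every effective $\R$-Weil divisor $\R$-Cartier and removing the obstacle entirely.
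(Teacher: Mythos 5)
Your proposal is correct and follows essentially the same route as the paper, whose proof is just the one-line observation that $M_\Gamma$ only involves mlds at closed points and that the mld is a local (formal/analytic) invariant; you have merely spelled out the transport of the boundary divisor through the local trivialization. Your closing remark about the global $\R$-Cartier issue is a reasonable extra precaution (and is indeed harmless since being lc at $x$ and $\mld(x)$ only depend on a neighborhood of $x$, with $\Q$-factoriality of the $X_i$ available in the application).
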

\begin{proof}
By definition of $M_\Gamma$ we only consider mlds that are attained in closed points. But then it follows from the fact that mlds are local invariants.
\end{proof}
As by Remark \ref{remark lsc} it suffices to have ACC in closed points, the hypotheses of Shokurov's criterion are fulfilled so that we may conclude the proof of Theorem \ref{thm main}. \qed

\end{document}